\numberwithin{equation}{section}
\numberwithin{table}{section}
\numberwithin{figure}{section}
\newtheorem{theorem}[equation]{Theorem}
\newtheorem{corollary}[equation]{Corollary}
\newtheorem{lemma}[equation]{Lemma}
\theoremstyle{remark}
\newtheorem{example}[equation]{Example}
\newtheorem{question}[equation]{Question}
\newtheorem{remark}[equation]{Remark}
\newtheorem{problem}[equation]{Problem}
\newtheorem{exercise}[equation]{Exercise}
\newcommand{\mystack}[2]{
	\ensuremath{ \substack{ \hbox{\tiny{${#1}$}} \\ \hbox{\tiny{${#2}$}} }} }
\def\op{\oplus}
\def\ot{\otimes}
\def\sb{{\hbox{\tiny{$\bullet$}}}}
\def\Kahler{K\"ahler}
\def\a{\alpha}
\def\tand{\quad\hbox{and}\quad}
\def\tAd{\mathrm{Ad}}
\def\tAut{\mathrm{Aut}}
\def\b{\beta}
\def\bC{\mathbb{C}}
\def\d{\delta}
\def\tdeg{\mathrm{deg}}
\def\tdim{\mathrm{dim}}
\def\tEnd{\mathrm{End}}
\def\e{\varepsilon}
\def\cF{\mathcal{F}}
\def\tFlag{\mathrm{Flag}}
\def\tGr{\mathrm{Gr}}
\def\tgr{\mathrm{gr}}
\def\fg{\mathfrak{g}}
\def\bh{\mathbf{h}}
\def\sH{\mathscr{H}}
\def\bi{\mathbf{i}}
\def\tIm{\mathrm{Im}}
\def\tim{\mathrm{im}}
\def\tker{\mathrm{ker}}
\def\cN{\mathcal{N}}
\def\cO{\mathcal{O}}
\def\tO{\mathrm{O}}
\def\bP{\mathbb{P}}
\def\tprim{\mathrm{prim}}
\def\bQ{\mathbb{Q}}
\def\cQ{\mathcal{Q}}
\def\bR{\mathbb{R}}
\def\tRe{\mathrm{Re}}
\def\tSp{\mathrm{Sp}}
\def\tSL{\mathrm{SL}}
\def\s{\sigma}
\def\fs{\mathfrak{s}}
\def\fsl{\mathfrak{sl}}
\def\cV{\mathcal{V}}
\def\w{\omega}
\def\sX{\mathscr{X}}
\def\bZ{\mathbb{Z}}
\def\z{\zeta}
\def\CKpmhs{MR664326}
\def\CKSdeg{MR840721}
\def\Schmid{MR0382272}
\newcounter{icnt}
\newenvironment{i_list}{ % modest indent a.) , etc.
  \begin{list}{{(\roman{icnt})}}
   {\usecounter{icnt} \setlength{\itemsep}{3pt}
    \setlength{\leftmargin}{25pt} 
    \setlength{\labelwidth}{20pt}
    \setlength{\listparindent}{20pt} }
   }
   {\end{list}}
\newcounter{acnt}
\begin{document}
\title{Degenerations of Hodge structure}
\author[Robles]{C. Robles}
\email{robles@math.duke.edu}
\address{Mathematics Department, Duke University, Box 90320, Durham, NC  27708-0320} 
\thanks{Robles is partially supported by NSF grants DMS 02468621 and 1361120.}
\date{\today}
\begin{abstract}
Two interesting questions in algebraic geometry are: (i) how can a smooth projective varieties degenerate? and (ii) given two such degenerations, when can we say that one is ``more singular/degenerate'' than the other?  Schmid's Nilpotent Orbit Theorem yields Hodge-theoretic analogs of these questions, and the Hodge-theoretic answers in turn provide insight into the motivating algebro-geometric questions, sometimes with applications to the study of moduli.  Recently the Hodge-theoretic questions have been completely answered.  This is an expository survey of that work.
\end{abstract}

\keywords{}
\subjclass[2010]
{
 %14C30. % Hodge theory (transcendental flavor?)
 14D07, 32G20. % Variation of Hodge structures.
 %14N15  % AG : Classical problems, Schubert calculus
 %14M15. % AG: Grassmannians, Schubert varieties, flag manifolds
 %14M17. % AG: Homogeneous spaces
 %17B08  % Coadjoint orbits; nilpotent varieties
 %53A55, % Differential Invariants
 %53C10, % G-structures
 %53C24, % Rigidity results
 %53C29, % Issues of holonomy
 %53C30, % Homogeneous manifolds
 %53C38, % Calibrations and calibrated geometries
 %58A15, % EDS (Cartan Thy)
 %58A14. % Hodge theory (under global analysis).
 %58A17, % Pfaffian systems
}
\maketitle

\setcounter{tocdepth}{1}

%\tableofcontents

%------------------------------------------------------------------------------
\section{Introduction}
%------------------------------------------------------------------------------

Two motivating questions from algebraic geometry are: 

\begin{question} \label{Q:1}
How can a smooth projective variety degenerate?
\end{question}

\noindent More precisely, let 
\begin{equation}\label{E:f}
  f : \sX \ \to \ S
\end{equation}
be a family of polarized algebraic manifolds.  That is, there is a surjective algebraic mapping $f : \overline\sX \to \overline S$ of complex projective varieties such that the generic fibre $X_s = f^{-1}(s)$ is smooth, $S \subset \overline S$ is the Zariski open subset over which $f$ has smooth fibres and $\sX = f^{-1}(S)$.  Assuming that the family \eqref{E:f} is well-understood, the first question is what can we say about the $X_s$ when $s \in \overline S\backslash S$?  

\begin{question} \label{Q:2}
What are the ``relations" between two such degenerations?  
\end{question}

\noindent The second question is roughly asking for a stratification of $\overline S\backslash S$ with the property that the family is equi-singular along the strata, and ``closure relations" between the strata.  For example, one might consider a case in which the $X_s$ are curves of genus $g$ with at worst nodal singularities, and take the strata to correspond to the number of nodes.

One way to gain insight into Questions \ref{Q:1} and \ref{Q:2} is to ask the analogous questions of invariants associated with the smooth projective varieties.  In this case, the invariant that we have in mind is a polarized Hodge structure.  The Hodge theoretic analogs of the motivating Questions \ref{Q:1} and \ref{Q:2} arise by considering the period map 
\[
  \Phi : S \ \to \ \Gamma \backslash D \,,
\]
which (roughly) assigns to $s \in S$ the Hodge structure on the primitive cohomology on $X_s$.  Here $D$ is a period domain parameterizing Hodge structures, and the assignment is defined up to the action of discrete automorphism group $\Gamma$ of $D$.  (See \S\S\ref{S:hs} and \ref{S:vhs} for details and further references.)  The Hodge theoretic analog of the first motivating question is: how does $\Phi(t)$ degenerate as $t$ approaches a boundary point $s_0 \in \overline S\backslash S$?  Very roughly Schmid's Nilpotent Orbit Theorem \ref{T:norb}, suitably interpreted through results of Cattani, Kaplan and Schmid (Theorem \ref{T:cks}), says that $\Phi(t)$ degenerates to a limit mixed Hodge structure, call it LMHS$(s_0)$.\footnote{The results of Cattani, Kaplan and Schmid hold in the more general setting of abstract variations of Hodge structure (\S\ref{S:vhs}).  Steenbrink \cite{MR0429885} described the limit mixed Hodge structure in the geometric setting, with $\tdim\,\overline S = 1$ and $X_{s_0}$ a normal crossing divisor, in terms of the cohomology of certain intersections of components of $X_{s_0}$.}  (More generally, detailed analysis of degenerations of polarized Hodge structures can be used to better understand degeneration of smooth projective varieties, and moduli spaces and their compactifications, see the surveys \cite{MR2931865, LazaPers} and the references therein.) 

On the other hand, each (not necessarily closed or smooth) algebraic variety carries a mixed Hodge structure by Deligne \cite{MR0498552}.  If DMHS$(s_0)$ denotes Deligne's mixed Hodge structure on $X_{s_0}$, then it is natural to ask how are the two mixed Hodge structures LMHS$(s_0)$ and DMHS$(s_0)$ related?  In the case that $\tdim\,\overline S = 1$ and the family $f:\overline\sX \to \overline S$ is semistable, the two mixed Hodge structures LMHS$(s_0)$ and DMHS$(s_0)$ are related by the the Clemens--Schmid exact sequence \cite{MR0444662}.  

A semisimple Lie group $G \supset \Gamma$ acts homogeneously on $D$, and there is a natural action of $G$ on the set of limit mixed Hodge structures.   (In practice, $G$ is a symplectic $\tSp(2g,\bR)$ or orthogonal $\tO(a,b)$ group.)  This first goal of this survey is to describe a classification of the $G$--conjugacy classes of limit mixed Hodge structures (\S\ref{S:classPMHS}).  This answers the Hodge theoretic analog of Question \ref{Q:1}.

For the Hodge theoretic interpretation of the second question, we consider the case that $\tdim\,\overline S = 2$ and $s_0$ admits a neighborhood $U \subset \overline S$ biholomorphic to a product of unit discs $\Delta \times \Delta$ that identifies $s_0$ with $(0,0)$ and so that $U \cap S = \Delta^* \times \Delta^*$ is a product of punctured discs. 
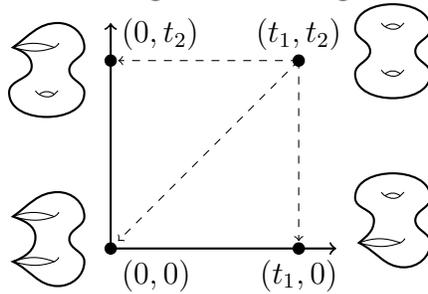
\begin{figure}[!h]
\caption{Degeneration of genus 2 curves}
\begin{tikzpicture}[scale=0.5]
% AXES
\draw [<->,thick] (0,6) -- (0,0) -- (6,0);
\draw [fill] (5,5) circle [radius=0.15];
\node [above] at (5,5) {$(t_1,t_2)$};
\draw [fill] (5,0) circle [radius=0.15];
\node [below] at (5,0) {$(t_1,0)$};
\draw [fill] (0,5) circle [radius=0.15];
\node [above right] at (0,5) {$(0,t_2)$};
\draw [fill] (0,0) circle [radius=0.15];
\node [below right] at (0,0) {$(0,0)$};
\draw [->,dashed] (5,5) -- (0.2,0.2);
\draw [->,dashed] (5,5) -- (5,0.2);
\draw [->,dashed] (5,5) -- (0.2,5);
% SMOOTH CURVE
\begin{scope}[shift={(7.5,4)}]
  % outline
  \draw [thick] (0,0) to [out=0,in=270] (1,0.65)
  	to [out=90,in=270] (0.6,1.25)
  	to [out=90,in=270] (1,1.85)
  	to [out=90,in=0] (0,2.5)
  	to [out=180,in=90] (-1,1.85)
  	to [out=270,in=90] (-0.6,1.25)
  	to [out=270,in=90] (-1,0.65)
  	to [out=270,in=180] (0,0);
\begin{scope}[yshift=-0.25cm]
  % lower hole, lower arc
  \draw [thin] (-0.3,1.0) to [out=-60,in=150] (-0.2,0.9) 
	to [out=-30,in=220] (0.2,0.9)
	to [out=40,in=220] (0.3,1.0);
  % lower hole, upper arc
  \draw [thin] (-0.2,0.9) to [out=40,in=140] (0.2,0.9);
\end{scope}
\begin{scope}[yshift=1cm]
  % upper hole, lower arc
  \draw [thin] (-0.3,1.0) to [out=-60,in=150] (-0.2,0.9) 
	to [out=-30,in=220] (0.2,0.9)
	to [out=40,in=220] (0.3,1.0);
  % upper hole, upper arc
  \draw [thin] (-0.2,0.9) to [out=40,in=140] (0.2,0.9);
\end{scope}
\end{scope}
% NODE AT BOTTOM
\begin{scope}[shift={(7.5,-0.5)}]
  % outline
  \draw [thick] (0.3,0) to [out=0,in=270] (1,0.65)
  	to [out=90,in=270] (0.6,1.25)
  	to [out=90,in=270] (1,1.85)
  	to [out=90,in=0] (0,2.5)
  	to [out=180,in=90] (-1,1.85)
  	to [out=270,in=90] (-0.5,1.25)
  	to [out=270,in=40] (-0.9,0.65)
  	to [out=-25,in=180] (0.3,0);
  % lower hole, lower arc SINGULARITY
  \draw [thin] (-0.9,0.65) to [out=-15,in=200] (0.2,0.65) 
	to [out=20,in=25] (0.3,0.75);
  % lower hole, upper arc
  \draw [thin] (-0.9,0.65) to [out=20,in=160] (0.2,0.65);
\begin{scope}[yshift=1cm]
  % upper hole, lower arc
  \draw [thin] (-0.3,1.0) to [out=-60,in=150] (-0.2,0.9) 
	to [out=-30,in=220] (0.2,0.9)
	to [out=40,in=220] (0.3,1.0);
  % upper hole, upper arc
  \draw [thin] (-0.2,0.9) to [out=40,in=140] (0.2,0.9);
\end{scope}
\end{scope}
% NODE AT TOP
\begin{scope}[shift={(-1.7,3.5)}]
  % outline
  \draw [thick] (0,0) to [out=0,in=270] (1,0.65)
  	to [out=90,in=270] (0.6,1.25)
  	to [out=90,in=270] (1,1.85)
  	to [out=90,in=0] (0.3,2.5)
  	to [out=180,in=35] (-0.9,1.85)
  	to [out=-20,in=90] (-0.4,1.35)
  	to [out=270,in=90] (-1,0.65)
  	to [out=270,in=180] (0,0);
\begin{scope}[yshift=-0.3cm]
  % lower hole, lower arc
  \draw [thin] (-0.3,1.0) to [out=-60,in=150] (-0.2,0.9) 
	to [out=-30,in=220] (0.2,0.9)
	to [out=40,in=220] (0.3,1.0);
  % lower hole, upper arc
  \draw [thin] (-0.2,0.9) to [out=40,in=140] (0.2,0.9);
\end{scope}
\begin{scope}[yshift=1.2cm] % SINGULARITY
  % upper hole, lower arc 
  \draw [thin] (-0.9,0.65) to [out=-15,in=200] (0.2,0.65) 
	to [out=20,in=25] (0.3,0.75);
  % upper hole, upper arc
  \draw [thin] (-0.9,0.65) to [out=20,in=160] (0.2,0.65);
\end{scope}
\end{scope}
% TWO NODES
\begin{scope}[shift={(-1.7,-1)}]
  % outline
  \draw [thick] (0.3,0) to [out=0,in=270] (1,0.65)
  	to [out=90,in=270] (0.6,1.25)
  	to [out=90,in=270] (1,1.85)
  	to [out=90,in=0] (0.3,2.5)
  	to [out=180,in=35] (-0.9,1.85)
  	to [out=-20,in=90] (-0.3,1.30)
  	to [out=270,in=20] (-0.9,0.65)
  	to [out=-25,in=180] (0.3,0);
  % lower hole, lower arc SINGULARITY
  \draw [thin] (-0.9,0.65) to [out=-15,in=200] (0.2,0.65) 
	to [out=20,in=25] (0.3,0.75);
  % lower hole, upper arc
  \draw [thin] (-0.9,0.65) to [out=20,in=160] (0.2,0.65);
\begin{scope}[yshift=1.2cm]
  % upper hole, lower arc SINGULARITY
  \draw [thin] (-0.9,0.65) to [out=-15,in=200] (0.2,0.65) 
	to [out=20,in=25] (0.3,0.75);
  % upper hole, upper arc
  \draw [thin] (-0.9,0.65) to [out=20,in=160] (0.2,0.65);
\end{scope}
\end{scope}
\end{tikzpicture}
\label{fig:g2}
\end{figure}
An illustrative example to keep in mind here is the case that $\Delta^* \times \Delta^*$ parameterizes smooth curves of genus 2, with one cycle degenerating to a node as $t_1\to0$, and another cycle degenerating to a node as $t_2 \to0$, c.f.~Figure \ref{fig:g2}.  The idea is that each of the three 1--parameter degenerations $(t_1,t_2) \to (t_1,0)$, $(t_1 , t_2) \to (0,t_2)$ and $(t,t) \to (0,0)$ will give limit mixed Hodge structures LMHS$(t_1,0)$, LMHS$(0,t_2)$ and LMHS$(0,0)$, respectively.  We regard LMHS$(0,0)$ as more degenerate/singular than LMHS$(t_1,0)$ and LMHS$(0,t_2)$ and declare ``polarized relations" LMHS$(t_1,0)$, LMHS$(0,t_2) \prec$ LMHS$(0,0)$.  The second goal of this survey is to classify the polarized relations between (representatives of) $G$--conjugacy classes of limit mixed Hodge structures (\S\ref{S:classpr}).  This answers the Hodge theoretic analog of Question \ref{Q:2}.  

Both classifications we shall discuss are given by discrete combinatorial data in the form of ``Hodge diamonds," weighted configurations of integer points in the $pq$--plane.  

More generally, we can ask these questions in the setting of Mumford--Tate domains.  The latter are generalizations of period domains: they are the classifying spaces of Hodge structures with (possibly) non-generic Hodge tensors \cite{MR2918237}.  As such they are realized as subdomains of period domains.  Unfortunately, once we move to the more general setting of Mumford--Tate domains, the combinatorially simple Hodge diamonds do not suffice to classify the PMHS and the polarized relations amongst them.  The general classifications are given by representation theoretic data (in the form of Weyl groups, Levi subgroups, and embeddings of $\tSL(2)$ into the Mumford--Tate group) that is associated with the domain; see \cite{SL2} and \cite{KPR} for details.  The goal of this article is to give an expository survey of that work in the relative simple setting of period domains.  Related expository articles include \cite{GGR} which studies a coarser notation of polarized relation that is defined in terms of the $G$--orbit structure of the topological boundary of $D$ in the compact dual, and \cite{BPR} which studies the representation theoretic structure of the nilpotent cones underlying a nilpotent orbit.  (As will be discussed in \S\ref{S:no}, nilpotent orbits asymptotically approximate period maps near $s_0 \in \overline S\backslash S$.)

%------------------------------------------------------------------------------
\subsection*{Acknowledgements}
%------------------------------------------------------------------------------
I learnt much of the material presented here from collaboration, correspondence and the work of several colleagues; I am especially indebted to M.~Green, P.~ Griffiths, M.~Kerr, R.~Laza, W.~McGovern and G.~Pearlstein.  

These notes were prepared for the Algebraic Geometry Summer Research Institute Graduate Student Bootcamp in Salt Lake City, Utah, July 06--10, 2015.  I thank the Bootcamp organizers \.{I}.~Co\c{s}kun, T.~de Fernex, A.~Gibney and M.~Leiblich for the opportunity to participate.

%------------------------------------------------------------------------------
\section{Hodge structures and their generalizations} \label{S:PMHS}
%------------------------------------------------------------------------------

We fix, once and for all, a rational vector space $V$, an integer $n$ and a nondegenerate bilinear form $Q : V \times V \to \bQ$ with the property $Q(u,v) = (-1) Q(v,u)$ for all $u,v\in V$.  A brief review of Hodge theory follows; for more see \cite{MR2012297, MR3288678, MR2393625} and the references therein.

%------------------------------------------------------------------------------
\subsection{Hodge structures}  \label{S:hs}
%------------------------------------------------------------------------------

A (pure) \emph{Hodge structure of weight $0\le n \in \bZ$} on the rational vector space $V$ is given by either of the following two equivalent objects:\footnote{It is implicit in this definition that we are assuming that the Hodge structure is effective ($V^{p,q}=0$ if either $p < 0$ or $q < 0$).  Neither this nor the assumption that the weight is non-negative is necessary (or even desirable), but we restrict to this case for notational/expository clarity and convenience.}  
A \emph{Hodge decomposition} 
\begin{equation} \label{E:hd}
  V_\bC \ = \ \bigoplus_{p+q = n} V^{p,q}
  \quad \hbox{such that} \quad \overline{V^{p,q}} = V^{q,p} \,.
\end{equation} 
A (finite, decreasing) \emph{Hodge filtration}
\begin{equation} \label{E:hf}
\begin{array}{c}
  0 \ \subset \ F^{n} \ \subset \ F^{n-1} \ \subset \cdots
  \subset F^1 \ \subset \ F^0 \ = \ V_\bC \\
  \quad\hbox{such that}\quad V_\bC \ = \ F^k \,\op\, \overline{F^{n+1-k}} \,.
\end{array}
\end{equation} 
The equivalence of the two definitions is given by 
\[
  F^k \ = \ \bigoplus_{p\ge k} V^{p,n-p} \tand
  V^{p,q} \ = \ F^p \,\cap\,\overline{F^q} \,.
\]

\begin{example} \label{eg:hs}
The Hodge Theorem asserts that the $n$-th cohomology group $V = H^n(X,\bQ)$ of a compact \Kahler~manifold admits a Hodge structure of weight $n$, with $V^{p,q} = H^{p,q}(X) \subset H^n(X,\bC)$ the cohomology classes in represented by $(p,q)$--forms.
\end{example}

\noindent The \emph{Hodge numbers} $\bh = (h^{p,q})$ and $\mathbf{f} = (f^p)$ are
\[
  h^{p,q} \ := \ \tdim_\bC\,V^{p,q} \tand 
  f^p \ := \ \tdim_\bC\,F^p \,.
\]

A weight $n$ Hodge structure on $V$ is $Q$--\emph{polarized} if the \emph{Hodge--Riemann bilinear relations} hold:
\begin{subequations} \label{SE:HR}
\begin{eqnarray}
  \label{E:hr1}
  Q(V^{p,q},V^{r,s}) \ = \ 0 & \hbox{if} & (p,q) \not= (s,r)\,,\\
  \label{E:hr2}
  \bi^{p-q}Q(v,\bar v) \ > \ 0 & \hbox{for all}& 0\not=v\in V^{p,q} \,.
\end{eqnarray}
\end{subequations}
The \emph{period domain} $D = D_{\bh,Q}$ is the set of all $Q$--polarized Hodge structures on $V$ with Hodge numbers $\bh$.  It is a homogeneous space with respect to the action of the real automorphism group
\[
  G \ := \ \tAut(V_\bR,Q) \,,
\]
and the isotropy group is compact.  If $n$ is odd, then $G \simeq \tSp(2g,\bR)$, where $\tdim\,V = 2g$; if $n = 2k$ is even, then $G \simeq \tO(a,b)$ where $a = \sum h^{k+2p,k-2p}$ and $b = \sum h^{k+1+2p,k-1-2p}$.

\begin{example}
Let $X\subset \bP^m$ be a projective algebraic manifold of dimension $d$ with hyperplane class $\w\in H^2(X,\bZ)$.  Given $n \le d$, the primitive cohomology
\[
  V \ = \ 
  P^n(X,\bQ) \ := \ \{ \a \in H^n(X,\bQ) \ | \ \w^{d-n+1}\wedge\a = 0 \}
\]
inherits the weight $n$ Hodge decomposition $V_\bC = \op_{p+q=n} \, H^{p,q}(X)\cap V_\bC$ from $H^n(X,\bQ)$.   The Hodge--Riemann bilinear relations assert that this Hodge structure is polarized by $Q(\a,\b) := (-1)^{n(n-1)} \int_X \a\wedge\b\wedge\w^{d-n}$.
\end{example}

With respect to the Hodge filtration \eqref{E:hf}, the first Hodge--Riemann bilinear relation \eqref{E:hr1} asserts that $F = (F^p)$ is \emph{$Q$--isotropic}
\begin{equation} \label{E:hr1'}
  Q(F^p,F^q) \ = \ 0 \,,\quad \hbox{for all} \quad p+q = n+1 \,.
\end{equation}
Equivalently, the Hodge filtration defines a point in the rational homogeneous variety 
\[
  \check D \ := \ \tFlag^Q(\mathbf{f},V_\bC)
\] 
of $Q$--isotropic filtrations $F^\sb = (F^p)$ of $V_\bC$; the variety $\check D$ is known as the \emph{compact dual} (\emph{of $D$}).  The complex automorphism group 
\[
  G_\bC \ := \ \tAut(V_\bC,Q)
\]
acts transitively on $\check D$, and contains the period domain $D$ as an open subset.  In summary, the compact dual $\check D$ parameterizes filtrations $F$ of $V_\bC$ satisfying the first Hodge--Riemann bilinear relation, and the period domain $D$ parameterizes filtrations satisfying both Hodge--Riemann bilinear relations.

%------------------------------------------------------------------------------
\subsection{Mixed Hodge structures} 
%------------------------------------------------------------------------------

%------------------------------------------------------------------------------
\subsubsection{Definition and examples}
%------------------------------------------------------------------------------

A \emph{mixed Hodge structure} (MHS) on $V$ is given an increasing filtration $W = (W_\ell)$ of $V$, and a decreasing filtration $F = (F^p)$ of $V_\bC$ with the property that $F$ induces a weight $\ell$ Hodge structure on the graded quotients
\[
  W^\tgr_\ell \ := \ W_\ell/W_{\ell-1} \,.
\]

\begin{example} \label{eg:mhsH}
If $X$ is a \Kahler~manifold of dimension $d$ and 
\[
  V \ = \ H(X,\bQ) \ := \ \bigoplus_n H^n(X,\bQ) \,,
\]
then $W_\ell = \op_{n\le \ell} \, H^n(X,\bQ)$ and $F^k = \op_{p\ge k} \, H^{p,\sb}(X)$ defines a mixed Hodge structure on $V$.
\end{example}

\begin{example} \label{eg:mhsH'}
Alternatively, if $X$ is a \Kahler~manifold of dimension $d$ and $V = H(X,\bQ)$, then $W_\ell = \op_{n\ge 2d-\ell}\,H^n(X,\bQ)$ and $F^k = \op_{q\le d-k} \, H^{\sb,q}(X)$ defines a mixed Hodge structure on $V$.
\end{example}

\begin{example}
Deligne \cite{MR0498552} has shown that the cohomology $H^n(X,\bQ)$ of an algebraic variety $X$ admits a (functorial) mixed Hodge structure.    Here $X$ need not be smooth or closed.  However, when $X$ is smooth and closed, Deligne's MHS is the (usual) Hodge structure of Example \ref{eg:hs}.  For an expository introduction to mixed Hodge structures on algebraic varieties see \cite{MR713069}; for a thorough treatment see \cite{MR2393625}.
\end{example}

%------------------------------------------------------------------------------
\subsubsection{Deligne splitting} \label{S:ds}
%------------------------------------------------------------------------------

Given a mixed Hodge structure $(W,F)$ on $V$ there exists a unique splitting
\begin{subequations}\label{SE:ds}
\begin{equation}
  V_\bC \ = \ \bigoplus I^{p,q}
\end{equation}
with the properties that 
\begin{equation}\label{E:dsFW}
  F^p \ = \ \bigoplus_{p\ge r} I^{r,\sb} \,,\quad
  W_\ell \ = \ \bigoplus_{p+q\le \ell} I^{p,q}
\end{equation}
and 
\begin{equation}
  \overline{I^{p,q}} \ \equiv \ I^{q,p} \quad\hbox{mod}\quad
  \bigoplus_{\mystack{r<q}{s<p}} I^{r,s} \,.
\end{equation}
\end{subequations}
The splitting is given by 
\begin{eqnarray*}
  I^{p,q} & = & F^p \,\cap\, W_{p+q} \,\cap\,
  \left( \overline{F^q}\,\cap\,W_{p+q} \,+\, \overline{U^{q-1}_{p+q-2}}
  \right) \,,\quad\hbox{where} \\
  U^a_b & := & \sum_{j\ge0} F^{a-j} \,\cap\, W_{b-j} \,.
\end{eqnarray*}
Note that \eqref{E:dsFW} implies that the 
\begin{equation} \label{E:dsWl}
  \hbox{Hodge decomposition on $W{}^\tgr_\ell$ induced by $F$ is }
  W^\tgr_\ell \ot \bC \ \simeq \ \bigoplus_{p+q=\ell} \, I^{p,q} \,.
\end{equation}
The MHS is \emph{$\bR$--split} if $\overline{I^{p,q}} = I^{q,p}$.

\begin{example}
The Deligne splittings of the MHS in Examples \ref{eg:mhsH} and \ref{eg:mhsH'} are given by $I^{p,q} = H^{p,q}(X)$ and $I^{p,q} = H^{d-q,d-p}(X)$, respectively.  Both are $\bR$--split.
\end{example}

Let 
\[
  \Lambda^{-1,-1}_\bC \ := \ \left\{ \xi \in \tEnd(V_\bC) \ \left| \ 
  \xi(I^{p,q}) \subset \bigoplus_{\mystack{r<p}{s<q}} I^{r,s} \ \forall \ 
  p,q \right.\right\} \,.
\]
Then $\Lambda^{-1,-1}_\bC$ is a nilpotent subalgebra of $\tEnd(V_\bC)$ and is defined over $\bR$.  Deligne showed that given a MHS $(W,F)$ there exists a unique $\d \in \Lambda^{-1,-1}_\bR$ so that $(W,\tilde F)$, with $\tilde F = e^{\bi\d}F$, is an $\bR$--split PMHS.  An important property of this new $\tilde F$ is that it determines the same Hodge structure on $W^\tgr_\ell$ as the original $F$.  In particular, if $V_\bC = \op \tilde I^{p,q}$ is the Deligne splitting for $(W,\tilde F)$, then 
\begin{equation}\label{E:dim}
  \tdim_\bC\,I^{p,q} \ = \ \tdim_\bC\,\tilde I^{p,q} \,,
\end{equation}
for all $p,q$.

%------------------------------------------------------------------------------
\subsection{Polarized mixed Hodge structures} 
%------------------------------------------------------------------------------

%------------------------------------------------------------------------------
\subsubsection{Jacobson--Morosov filtrations} \label{S:jmf}
%------------------------------------------------------------------------------

Every nilpotent endomorphism $N : V \to V$ determines a unique increasing filtration $W(N) = (W_\ell(N))$ of $V$ with the two properties that
\begin{subequations} \label{SE:W(N)}
\begin{equation}\label{E:N(W)}
  N (W_\ell(N)) \ \subset \ W_{\ell-2}(N)
\end{equation}
and 
\begin{equation}\label{E:Wgr-isom}
  \hbox{the induced $N^\ell : W^\tgr_\ell(N) \to W^\tgr_{-\ell}(N)$ 
  is an isomorphism for all $\ell \ge 0$.}
\end{equation}
\end{subequations}
Moreover, if $N$ lies in the Lie algebra
\[
  \fg \ := \ \tEnd(V,Q)
\]
of $G = \tAut(V,Q)$, then the filtration $W(N)$ is $Q$--isotropic.

\begin{exercise}
Suppose that $N^k \not=0$ and $N^{k+1} = 0$.  Show that $W(N)$ is given inductively by 
\begin{eqnarray*}
  W_k \ = \ \tker\,N^{k+1} \ = \ V
  & \hbox{and} & W_{-k-1} \ = \ \tim\,N^{k+1} \ = \ 0 \,,\\
  W_{k-1} \ = \ \tker\,N^k
  & \hbox{and} &
  W_{-k} \ = \ \tim\,N^k \,,
\end{eqnarray*}
and for all $0 \le \ell \le k-2$, 
\[
  W_\ell \ = \ \{ v \in W_{\ell+1} \ | \ N^\ell v \subset W_{-\ell-2} \} 
  \tand
  W_{-\ell-1} \ = \ N^{\ell+1}(W_\ell) \,.
\]
\end{exercise}

Notice that the first nontrivial subspace $W_{-k}(N)$ in the Jacobson--Morosov filtration is indexed by a negative integer (if $N \not=0$).  The ``shifted'' filtration, with nontrivial subspaces indexed by nonnegative integers is denoted $W(N)[-k]$, and given by 
\[
  W_\ell(N)[-k] \ := \ W_{\ell-k}(N) \,.
\]

\begin{remark} \label{R:sl2}
It is sometimes useful to describe the Jacobson--Morosov filtration in terms of the action of a three-dimensional subalgebra $\fs \subset \tEnd(V)$ that is isomorphic to $\fsl(2)$ and contains $N$.  Specifically, the Jacobson--Morosov Theorem asserts that there exist $Y , N^+ \in \tEnd(V)$ so that 
\begin{equation}\label{E:sl2}
  [Y,N] \ = \ -2N \,,\quad [N^+,N] \ = \ Y \tand
  [Y,N^+] \ = \ 2N^+ \,.
\end{equation}
When $N \in \fg$, we can choose $Y,N^+ \in \fg$.  The relations \eqref{E:sl2} imply that $\{ N,Y,N^+\}$ span a subalgebra of $\tEnd(V)$ that is isomorphic to $\fsl(2)$.  Moreover, the element $Y$ acts on $V$ by integer eigenvalues.  If
\[
  V \ = \ \bigoplus_{\ell} V_\ell
\]
is the $Y$--eigenspace decomposition of $V$, then
\[
  W_\ell(N) \ = \ \bigoplus_{m\le\ell} V_m \,.
\]
The Jacobi identity and \eqref{E:sl2} imply $N(V_\ell) \subset N_{\ell-2}$; from this we see that \eqref{E:N(W)} holds.  Note that $W^\tgr_\ell(N) \simeq V_\ell$.  It is a classical result from the representation theory of $\fsl(2)$ that $N^\ell : V_\ell \to V_{-\ell}$ is an isomorphism for all $\ell \ge0$; from this we see that \eqref{E:Wgr-isom} holds.
\end{remark}

%------------------------------------------------------------------------------
\subsubsection{Definition and examples}
%------------------------------------------------------------------------------

A \emph{$Q$--polarized mixed Hodge structure} (PMHS) on $V$ is given by a mixed Hodge structure $(W,F)$ and a set $\cN \subset \fg_\bR$ of nilpotent elements with the properties:
\begin{i_list}
\item For all $N \in \cN$ we have $N^{n+1} = 0$ and $W = W(N)[-n]$.
\item The filtration $F$ is $Q$--isotropic, and $N(F^p) \subset F^{p-1}$ for all $N \in \cN$ and $p$.
\item The filtration $F$ induces a weight $n+\ell$ Hodge structure on the \emph{primitive space}
\[
  P(N)_\ell \ := \ \tker\, \{ N^\ell : W^\tgr_{n+\ell} \to W^\tgr_{n-\ell-2} \}
\]
that is polarized by 
\[
  Q^N_\ell(\cdot,\cdot) \ := \ Q(\cdot , N^\ell \cdot) \,,
\]
for all $\ell \ge0$.
\end{i_list}
We sometimes say that the mixed Hodge structure $(W,F)$ is \emph{polarized by $\cN$}.  From (i) we see that the filtration $W$ is determined by $\cN$, and we will often write $(F,\cN)$ for the PMHS $(W,F,\cN)$.

\begin{example}
Let $X\subset \bP^m$ be a projective algebraic manifold of dimension $n$ with hyperplane class $\w\in H^2(X,\bZ)$.  Let $V = H(X,\bQ)$ and define $Q(\a,\b) = (-1)^{k(k-1)/2} \int_X \a\wedge\b$, with $k = \tdeg\,\a$.  The ray $\cN = \{ t \w \ | \ t > 0 \}$ spanned by the Lefschetz operator $\w : H^\sb(X,\bQ) \to H^{\sb+2}(X,\bQ)$ polarizes the mixed Hodge structure defined of Example \ref{eg:mhsH'}.  (Alternatively, there is a canonical dual $N_\w^* : H^\sb(X,\bQ) \to H^{\sb-2}(X,\bQ)$ to the Lefschetz operator, and the mixed Hodge structure of Example \ref{eg:mhsH} is polarized by the ray $\cN = \{ t \,N_\w^* \ | \ t > 0 \}$.)
\end{example}

\begin{remark}
Given a PMHS $(W,F,\cN)$, let $(W,\tilde F = e^{\bi\d}F)$ be the $\bR$--split MHS of \S\ref{S:ds}.  Then $(W,\tilde F,\cN)$ is a PMHS, and $\tilde F$ determines the same $Q^N_\ell$--polarized Hodge structures on $P(N)_\ell$ as $F$ for all $\ell \ge 0$.  Moreover, $\d \in \Lambda^{-1,-1} \cap \fg_\bR$ and $[\d,N] = 0$ for all $N \in \cN$.  Finally, if $V_\bC = \op I^{p,q}$ is the Deligne splitting for $(W,\tilde F)$, then $N(I^{p,q}) \subset I^{p-1,q-1}$ for all $N \in \cN$.
\end{remark}

%------------------------------------------------------------------------------
\subsection{Variation of Hodge structure} \label{S:vhs}
%------------------------------------------------------------------------------

Let $S$ be a complex manifold with fundamental group $\pi_1(M)$ and universal cover $\tilde S$.  Let $\rho: \pi_1(S) \to \tAut(V,Q)$ be a representation of the fundamental group.  Then 
\[
  \cV \ := \ \tilde S \times_{\pi_1(S)} V
\]
defines a flat vector bundle over $S$.  Let $\nabla$ denote the flat connection.  The bilinear form $Q$ induces a flat form $\cQ$ on $\cV$.  A (\emph{polarized}) \emph{variation of Hodge structure} (VHS) {over $S$} is given by a holomorphic filtration $\cF^\sb$ of $\cV_\bC$ that defines a $\cQ_s$--polarized Hodge structure on each fibre $\cV_s$, $s \in S$, and with the property that $\nabla \cF^p \subset \Omega^1_S \ot \cF^{p-1}$.  The variation of Hodge structure induces a \emph{period map}
\[
  \Phi : S \to \Gamma \backslash D \,,
\]
where $\Gamma = \rho(\pi_1(S)) \subset \tAut(V,Q)$.  Geometrically, VHS arise when considering a family $\sX \to S$ of polarized algebraic manifolds: one obtains a VHS $\cV \to S$ with fibres $\cV_s$ isomorphic to the primitive cohomology $P^n(\sX_s,\bQ)$, and $\cF_s$ the Hodge filtration, see \cite{MR0229641, MR0233825}.

%------------------------------------------------------------------------------
\section{Nilpotent orbits} \label{S:no}
%------------------------------------------------------------------------------

The significance of nilpotent orbits comes from Schmid's Nilpotent Orbit Theorem (\S\ref{S:snot}) which asserts that that every (lifting of a) period map (\S\ref{S:vhs}) is well approximated by a nilpotent orbit.  In particular, the asymptotic behavior of a period mapping is encoded by nilpotent orbits.  Moreover, results of Cattani, Kaplan and Schmid imply that a nilpotent orbit is equivalent to a PMHS (Theorem \ref{T:cks}); this is the sense in which 
\begin{center}
\emph{a PMHS arises from a degeneration of Hodge structure.}
\end{center}

%------------------------------------------------------------------------------
\subsection{Definition}
%------------------------------------------------------------------------------

A \emph{nilpotent orbit} is a map $\theta : \bC^r \to \check D$ of the form
\[
  \theta(z) \ = \ \exp(\sum z_j N_j)\cdot F \,,
\]
with $F \in \check D$ and $\{N_1,\ldots,N_r\}\subset \fg_\bR$ a set of commuting nilpotent elements, and having the properties that:
\begin{i_list}
\item $N_j(F^p) \subset F^{p-1}$ for all $j,p$, and 
\item $\theta(z) \in D$ when $\tIm(z_j) \gg0$ for all $j$.
\end{i_list}

%------------------------------------------------------------------------------
\subsection{Schmid's Nilpotent Orbit Theorem} \label{S:snot}
%------------------------------------------------------------------------------

Fix a VHS $(\cV,\cQ,\cF)$ over $S$ as in \S\ref{S:vhs}, and let $\Phi : S \to \Gamma\backslash D$ denote the associated period map.  In practice one is interested in the case that $S$ is a Zariski open subset of a compact analytic space $\overline S$, and wants to describe the singularities of $\Phi$ on the boundary $\overline S\backslash S$.  Applying Hironaka's resolution of singularities \cite{MR0199184}, we may assume that $\overline S$ is smooth.  If $\overline S \backslash S$ has codimension greater than two, then $\Phi$ extends holomorphically to $\overline S$ \cite{MR0259958}.  In the case that the boundary has codimension one, we may again apply Hironaka's resolution of singularities to assume that $\overline S\backslash S$ is locally a normal crossing divisor.  That is, every point $s \in \overline S$ admits a neighborhood of the form $\Delta^m$ and with the property that $\Delta^m \cap S = (\Delta^*)^r \times \Delta^{m-r}$; here 
\[
  \Delta \ := \ \{ t \in \bC \ : \ |t| < 1 \}
\]
is the unit disc, and 
\[
  \Delta^* \ := \ \{ t \in \bC \ : \ 0 < |t| < 1 \} 
\]
is the punctured unit disc.

The nilpotent orbit theorem is a local statement, describing the behavior of the period map $\Phi(t)$ as $t \to t_o \in \overline S\backslash S$, so we now restrict $\Phi$ to $(\Delta^*)^r \times \Delta^{m-r}$.  For simplicity of exposition we will take $m=r$ and consider the period map
\begin{equation}\label{E:Phi}
  \Phi : (\Delta^*)^r \ \to \ \Gamma\backslash D \,,
\end{equation}
with $\Gamma = \rho(\pi_1( (\Delta^*)^r)) \subset \tAut(V,Q)$; for the general statement of the Nilpotent Orbit Theorem see \cite{MR0382272}.  The fundamental group $\pi_1( (\Delta^*)^r)$ is generated by elements $\{ \gamma_1',\ldots,\gamma_r'\}$ where $\gamma_j'$ may be identified with the counter-clockwise generator of the fundamental group of the $j$--th copy of $\Delta^*$ in $(\Delta^*)^r$.  The images $\gamma_j  = \rho(\gamma_j') \in \Gamma$ are the \emph{monodromy transformations}.  They pairwise commute and are known to be quasi-unipotent.\footnote{In the geometric setting quasi-unipotency is due to Katz \cite{MR0291177} and Landman \cite{MR0344248}; the general statement is due to Borel \cite[(4.5)]{MR0382272}.}  Quasi-unipotency implies there exist $0 \le m_j \in \bZ$ and nilpotent $N_j \in \fg_\bR$ so that 
\[
  \gamma_j^{m_j} \ = \ \exp(m_j\,N_j) \,.
\] 

Let 
\[
  \sH \ := \ \{ z \in \bC \ | \ \tIm\,z > 0 \}
\]
denote the upper-half plane, so that $\sH \to \Delta^*$, sending $z \mapsto t = e^{2\pi\bi z}$, is the universal covering map.  Then $\gamma_j'$ acts on $\sH^r$ by 
\[
  \gamma'_j \cdot (z_1,\ldots,z_r) \ = \ 
  (z_1,\ldots,z_{j-1}, z_j+1 , z_{j+1} , \ldots , z_r ) \ =: \ z + \e_j
\]
by the translation replacing the $j$--th coordinate $z_j$ with $z_j+1$.  Fixing a lift $\tilde\Phi : \sH^r \to D$ of the period map \eqref{E:Phi}, we have $\gamma_j\cdot\tilde\Phi(z) = \tilde\Phi(z+\e_j)$.  In particular, the map
\[
  \widetilde\Psi : \sH^r \ \to \ \check D \quad\hbox{sending}\quad
  z \ \mapsto \ 
  \exp\left(-\textstyle{\sum_j}\, m_j z_j N_j\right) \cdot \tilde\Phi(z)
\]
descends to a well-defined map $\Psi : (\Delta^*)^r \to \check D$.

\begin{theorem}[Schmid's Nilpotent Orbit Theorem {\cite{MR0382272}}] \label{T:norb}
The map $\Psi$ extends holomorphically to $\Delta^r \to \check D$.  Setting $F := \Psi(0)$, the map $\theta(z) := \exp( \sum_j\, z_j N_j) \cdot F$ is a nilpotent orbit.  Moreover, given any $G$--invariant distance $d$ on $D$, there exist constants $0 \le \a,\b,C$ such that $\theta(z) \in D$ and 
\begin{equation}\label{E:bound}
  d \left( \tilde\Phi(z) , \theta(z) \right) \ \le \ 
  C \,\textstyle{\sum_j}\,(\tIm\,z_j)^\b\,
  \exp (-2\pi(\tIm\,z_j)/m_j )
\end{equation}
so long as $\tIm\,z_j > \a$.
\end{theorem}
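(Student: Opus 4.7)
The plan is to reduce to the single-variable case $r=1$, handle that in three steps, and then induct on $r$. For $r=1$, the three steps are: (a) show $\Psi : \Delta^* \to \check D$ extends holomorphically across $t = 0$; (b) verify that $F := \Psi(0)$ generates a nilpotent orbit; (c) establish the asymptotic estimate \eqref{E:bound}.

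For (a), the key ingredient is the distance-decreasing property of the period map.  One equips $D$ with a $G$--invariant metric whose restriction to horizontal tangent directions is controlled by Griffiths' curvature computation; combined with the Ahlfors--Schwarz lemma, this implies that $\tilde\Phi : \sH \to D$ is distance-decreasing from the Poincar\'e metric on $\sH$ to this metric on $D$.  The monodromy-twisted lift $\widetilde\Psi(z) = \exp(-m z N)\cdot \tilde\Phi(z)$ descends to $\Psi : \Delta^* \to \check D$ (after, if needed, pulling back to a finite cover to arrange unipotent monodromy).  Combining the distance-decreasing bound with an analysis of the action of $\exp(-m z N)$ as $\tIm\,z \to \infty$ shows that $\Psi(t)$ remains in a compact subset of $\check D$, and a Riemann-removability argument then provides the holomorphic extension $\Psi : \Delta \to \check D$.

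For (b), horizontality $N(F^p) \subset F^{p-1}$ is extracted from Griffiths transversality applied to $\tilde\Phi$: differentiating the expression $\tilde\Phi(z) = \exp(m z N)\,\Psi(e^{2\pi\bi z})$ and letting $\tIm\,z \to \infty$ isolates the $N$--term and forces it to satisfy the infinitesimal period relation on $F = \Psi(0)$.  The condition $\theta(z) \in D$ for $\tIm\,z \gg 0$ then follows a posteriori from (c).  For (c), write $\tilde\Phi(z) = \exp(m z N)\,\Psi(t)$ with $t = e^{2\pi\bi z}$.  Since $\Psi$ is holomorphic at $0$, $\Psi(t) - F = O(|t|) = O(\exp(-2\pi\tIm\,z))$ in any linear chart near $F \in \check D$.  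Comparing with $\theta(z) = \exp(z N)\cdot F$ and passing to the finite cover on which monodromy is unipotent (which rescales $z \mapsto z/m$ and produces the factor $1/m$ in the exponent), the difference $\tilde\Phi(z) - \theta(z)$ is the image of the small vector $\Psi(t) - F$ under the unipotent action of $\exp(\cdot N)$, which contributes a polynomial in $\tIm\,z$ of degree at most $n$ (since $N^{n+1} = 0$).  This yields \eqref{E:bound} with $\beta$ depending only on $n$, and in particular places $\theta(z)$ within an arbitrarily small $d$--neighborhood of $\tilde\Phi(z) \in D$ once $\tIm\,z$ is sufficiently large, so $\theta(z) \in D$ there.

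The main obstacle is the compactness assertion in (a): the distance-decreasing inequality only controls intrinsic distances in $D$ and does not, in itself, preclude $\Psi(t)$ from drifting to $\partial D$ inside $\check D$.  Schmid's resolution uses a delicate geometric analysis of the boundary behavior of $D$ in its compact dual, which in full generality is coupled to his $\tSL(2)$--orbit theorem.  For the multivariable case $r>1$, one then inducts on $r$: apply the one-variable result along each coordinate axis to extend $\Psi$ across the individual coordinate hyperplanes of $\Delta^r$, use a Hartogs-type extension to fill in across higher-codimension strata, and repeat the asymptotic comparison one coordinate at a time, summing the resulting exponentially decaying contributions to obtain the stated bound.
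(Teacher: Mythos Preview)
The paper does not prove this theorem: it is stated with a citation to Schmid's original paper \cite{MR0382272}, and the surrounding text only adds the remark (in a footnote) that the distance bound \eqref{E:bound} is Deligne's improvement of Schmid's original estimate.  There is therefore no ``paper's proof'' to compare your proposal against.

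That said, your sketch is broadly in the spirit of Schmid's argument, but it contains one substantive error of attribution and logical dependence.  You write that the compactness needed for the holomorphic extension ``in full generality is coupled to his $\tSL(2)$--orbit theorem.''  This is backwards: in \cite{MR0382272} the Nilpotent Orbit Theorem (Theorem~4.12 there) is proved \emph{before} and \emph{independently of} the $\tSL(2)$--Orbit Theorem (Theorem~5.13).  The holomorphic extension of $\Psi$ across $t=0$ is obtained from the distance-decreasing property together with norm estimates on flat sections (Schmid's Theorem~4.9 and its corollaries), which control how $\exp(-mzN)$ interacts with the period map; no $\fsl(2)$--triple is invoked at that stage.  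The $\tSL(2)$--Orbit Theorem is a much deeper refinement that takes the nilpotent orbit as input.

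A smaller point: for the several-variable case, appealing to a ``Hartogs-type extension'' is not quite right, since $\check D$ is a compact rational homogeneous variety and Hartogs' theorem does not apply directly to maps into such targets.  Schmid's multivariable argument proceeds instead by uniform one-variable estimates in each direction, with the remaining variables treated as parameters.
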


\noindent The bound \eqref{E:bound}\footnote{This bound is an improvement, due to Deligne, of the initial distance estimate given in \cite{MR0382272}.} is the precise sense in which the nilpotent orbit $\theta$ strongly approximates the lifted period map $\tilde\Phi$ as $\tIm\,z_j \to \infty$.  The constants $\a,\b,C$ depend only on $d$, the $m_j$, the Hodge numbers $\bh = (h^{p,q})$ and the weight $n$.  

%------------------------------------------------------------------------------
\subsection{Relationship to PMHS}
%------------------------------------------------------------------------------

Fix $F \in \check D$ and pairwise commuting nilpotent $N_1 , \ldots , N_r \in \fg_\bR$.  Let 
\[
  \s \ := \ \left\{ \left. \textstyle{\sum_j}\, x_j N_j \ \right| \
  x_j > 0 \right\} \ \subset \ \fg_\bR
\]
be the \emph{nilpotent cone} spanned by the $\{ N_j\}$.

\begin{theorem}[Cattani,Kaplan, Schmid] \label{T:cks}
The map $\theta : \bC^r \to \check D$ sending
\[
  z \ \mapsto \ \exp\left(\textstyle{\sum}_j\, z_j N_j\right)\cdot F 
\]
is a nilpotent orbit if and only if the Jacobson--Morosov filtration $W(N)$ is independent of $N \in \s$, so that $W(\s)$ is well-defined, and $(W(\s)[-n],F,\s)$ is a PMHS.  Moreover, if $(W(N)[-n],\tilde F)$ is the $\bR$--split PMHS constructed by Deligne \emph{(\S\ref{S:ds})}, then the nilpotent orbits $\theta(z)$ and $\tilde\theta(z) = \exp(\sum z_j N_j)\cdot \tilde F$ agree to first order as $\tIm\,z_j \to \infty$.
\end{theorem}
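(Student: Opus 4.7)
Given a nilpotent orbit $\theta$, I would first reduce to the single-variable theorem. For each $N = \sum x_j N_j \in \s$ with $x_j > 0$, the map $w \mapsto \exp(wN)\cdot F = \theta(x_1 w, \ldots, x_r w)$ is a one-variable nilpotent orbit, so Schmid's single-variable result \cite{Schmid} (proven via the $\tSL(2)$-orbit theorem) yields that $(W(N)[-n], F, \{N\})$ is a PMHS for each such $N$. The horizontality axiom $N_j F^p \subset F^{p-1}$ extends by linearity to all of $\s$, and the polarization of the primitive subspaces $P(N)_\ell$ by $Q^N_\ell$ comes directly from the single-variable conclusion.

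\textbf{The main obstacle} is to show that $W(N)$ is constant on $\s$, so that $W(\s)$ is well-defined. This is the heart of the Cattani-Kaplan contribution. I would follow their approach by analyzing the one-parameter family $N_s := N_1 + sN_2$ for $N_1, N_2 \in \s$: each $N_s$ (for $s \ge 0$) gives a PMHS by the single-variable case, and the dimensions $\tdim W^\tgr_\ell(N_s)$ are semicontinuous in $s$. Combining this semicontinuity with the polarization constraints on the $N_s$-primitive subspaces (whose Hodge numbers are determined by those of $F$, hence independent of $s$) forces these dimensions to be constant, and together with the nested structure of Jacobson-Morosov filtrations this pins down $W(N_s)$ as independent of $s$. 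A rank induction over generators of $\s$ then delivers the invariance on the full cone.

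\textbf{Backward direction and first-order agreement.} Assume $(W(\s)[-n], F, \s)$ is a PMHS. By Deligne's $\d$-splitting (\S\ref{S:ds}), there is a unique $\d \in \Lambda^{-1,-1} \cap \fg_\bR$ commuting with every $N \in \s$ such that $(W, \tilde F = e^{\bi \d} F, \s)$ is an $\bR$-split PMHS. I would first handle the $\bR$-split case: attach an $\fsl(2)$-triple $\{N, Y, N^+\}$ to an interior $N \in \s$ via Remark \ref{R:sl2}, and use the $Y$-eigenspace decomposition of $\exp(\bi y N)\cdot\tilde F$ to reduce the Hodge-Riemann positivity \eqref{E:hr2} for large $y$ to the given polarization of the primitives $P(N)_\ell$ by $Q^N_\ell$. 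For the general case, observe that after rescaling by $\exp(\tfrac{1}{2}\log y \cdot Y)$, the element $e^{\bi\d}$ interpolating $\tilde F$ and $F$ has size $O(y^{-1})$, since $\d \in \Lambda^{-1,-1}$ has $Y$-weight at most $-2$; thus for $\tIm\,z_j \gg 0$, the point $\theta(z)$ lies in the open set $D \subset \check D$ because $\tilde\theta(z)$ does, proving the backward direction. The same decay estimate yields the final ``first-order agreement'' assertion: after the $\fsl(2)$-rescaling that extracts the limiting $\tSL(2)$-orbit from each of $\theta$ and $\tilde\theta$, the $e^{\bi\d}$ perturbation tends to the identity, so the two orbits have identical leading asymptotics.
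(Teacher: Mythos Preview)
Your proposal decomposes the theorem exactly as the paper does: the one-variable forward direction is Schmid, the cone-independence of $W(N)$ is Cattani--Kaplan, and the converse together with the first-order agreement is Cattani--Kaplan--Schmid. The paper's proof is in fact nothing more than this attribution to the three original references, with no technical argument supplied; you have gone further and sketched each step.

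One point to flag in your Cattani--Kaplan sketch: the assertion that the Hodge numbers of the $N_s$-primitive subspaces ``are determined by those of $F$, hence independent of $s$'' does not stand on its own, since those numbers depend on $W(N_s)$ as well as on $F$, and $W(N_s)$ is precisely the object whose constancy is in question. The actual Cattani--Kaplan argument is more delicate, passing through \emph{relative} weight filtrations: one shows that $W(N_1)$ is the weight filtration of $N_1$ relative to $W(N_1+N_2)$, and then invokes uniqueness properties of relative weight filtrations for commuting nilpotents to force $W(N_1+sN_2)$ to be constant. Your semicontinuity remark is a useful ingredient but does not by itself close the argument. Your sketch of the converse via the $Y$-rescaling and the $\Lambda^{-1,-1}$ decay of $\d$ is, by contrast, a faithful summary of the CKS mechanism.
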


\begin{remark} \label{R:asymlim}
The common asymptotic limit 
\[
  \Phi_\infty(\s,F) \ := \ \lim_{\mystack{\tIm\,z\to\infty}{\tRe\,z \ \mathrm{bdd}}} \exp(z N) \cdot F \,,
\]
with $\tRe\,z$ bounded and $z \in \bC$, of the two nilpotent orbits is independent of our choice of $N \in \s$, \cite{\CKSdeg}; as a filtration of $V_\bC$, the point $\Phi_\infty(\s,F) \in \check D$ is given by the Deligne splitting \eqref{SE:ds} as 
\[
  \Phi_\infty^p(\s,F) \ = \ \bigoplus_{q \le n-p} I^{\sb,q} \,.
\] 
Moreover, note that $\exp(\z N) \cdot F \in D$ for all $\tIm\,\z\gg0$ implies $\Phi_\infty(\s,F)$ lies in the topological closure $\overline D \subset \check D$ of $D$.
\end{remark}

\begin{proof}
Given a one--variable nilpotent orbit $\exp(zN)F$, Schmid \cite{\Schmid} proved that $(F,N)$ is a PMHS.  Given a several--variable nilpotent orbit, the independence of the Jacobson--Morosov filtration $W(N)$ of the choice of $N$ in the underlying nilpotent cone was proven by Cattani and Kaplan \cite{\CKpmhs}.  From these two results it follows that a several--variable nilpotent orbit determines a PMHS.  The converse was proved by Cattani, Kaplan and Schmid \cite{\CKSdeg}.  The asymptotic first-order agreement of $\theta$ and $\tilde\theta$ is also established in \cite{\CKSdeg}.
\end{proof}

%------------------------------------------------------------------------------
\section{Classifications}
%------------------------------------------------------------------------------

%------------------------------------------------------------------------------
\subsection{Classification of $\bR$--split PMHS} \label{S:classPMHS}
%------------------------------------------------------------------------------

Notice that $G$ acts on the set of PMHS: given $g \in G$ and a PMHS $(W,F,\cN)$ we have 
\[
  g \cdot (W,F,\cN) \ := \ ( g \cdot W , g \cdot F , \tAd_g \cN) \,.
\]
Moreover, $(W,F,\cN)$ is $\bR$--split if and only if $g \cdot (W,F,\cN)$ is.  In this section will classify the $\bR$--split PMHS.  The classification is given by Hodge diamonds, which depend only on the MHS $(W,F)$, and it is a consequence of \eqref{E:dim} that $(W,F)$ and $(W,\tilde F)$ have the same Hodge diamond.

Given a MHS $(W,F)$, let $V_\bC = \op I^{p,q}_{W,F}$ be the Deligne splitting (\S\ref{S:ds}).  The \emph{Hodge diamond} of $(W,F)$ is the function $\Diamond(W,F) : \bZ \times \bZ \to \bZ$ given by 
\[
  \Diamond(W,F)(p,q) \ := \ \tdim_\bC \, I^{p,q} \,.
\]

\begin{lemma}[{\cite{KPR}}] \label{L:HD}
The Hodge diamond $\Diamond = \Diamond(W,F,\cN)$ of a PMHS on a period domain $D$ parameterizing weight $n$ Hodge structures with Hodge numbers $\bh = (h^{p,q})_{p+q=n}$ satisfies the following four properties:  The columns of the Hodge diamond sum to the Hodge numbers
\begin{subequations} \label{SE:hd}
  \begin{equation} \label{E:hd1}
    \textstyle{\sum_p} \, \Diamond(p,q) \ = \ h^{n-q,q} \,.
  \end{equation}
The Hodge diamond is symmetric about the diagonal $p=q$:
  \begin{equation} \label{E:hd2}
    \Diamond(p,q) \ = \ \Diamond(q,p) \,.
  \end{equation}
The Hodge diamond is symmetric about $p+q = n$:
  \begin{equation}\label{E:hd3}
    \Diamond(p,q) \ = \ \Diamond(n-q,n-p) \,.
  \end{equation}
The values $\Diamond(p,q)$ are non-increasing as one moves away from $p+q=n$ along a(n off) diagonal:
  \begin{equation} \label{E:hd4}
    \Diamond(p,q) \ \ge \ \Diamond(p+1,q+1) \quad \hbox{for all} 
    \quad p+q \ge n \,.
  \end{equation}
\end{subequations}
\end{lemma}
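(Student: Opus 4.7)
The plan is to reduce to an $\bR$--split PMHS and then exploit the structure of the Deligne bigrading. By Deligne's $\bR$--splitting (\S\ref{S:ds}, extended to PMHS), any PMHS $(W,F,\cN)$ is equivalent to an $\bR$--split PMHS $(W, \tilde F, \cN)$ with $N(I^{p,q}) \subset I^{p-1, q-1}$ for every $N \in \cN$; \eqref{E:dim} ensures the Hodge diamond is unchanged. So I may assume $V_\bC = \bigoplus I^{p,q}$ is $\bR$--split, and in particular $\overline{I^{p,q}} = I^{q,p}$ with each $N \in \cN$ bihomogeneous of bidegree $(-1,-1)$ relative to the splitting.

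Then I would prove the four properties in the order \eqref{E:hd2}, \eqref{E:hd3}, \eqref{E:hd1}, \eqref{E:hd4}. Property \eqref{E:hd2} is immediate since complex conjugation is a dimension--preserving $\bC$--antilinear isomorphism $I^{p,q} \to I^{q,p}$. For \eqref{E:hd3}, fix $N \in \cN$; the Jacobson--Morosov isomorphism $N^\ell : W^\tgr_{n+\ell} \to W^\tgr_{n-\ell}$ from \eqref{E:Wgr-isom}, pulled back to the bigraded description $W^\tgr_m \simeq \bigoplus_{p+q = m} I^{p,q}$ provided by \eqref{E:dsFW}, is block--diagonal with summands $N^\ell : I^{p,q} \to I^{p-\ell, q-\ell}$ (for $p+q = n+\ell$) indexed by distinct source--target pairs; since the total map is an isomorphism, each summand is too. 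Setting $\ell = p+q-n \ge 0$ yields $\tdim I^{p,q} = \tdim I^{n-q, n-p}$, with the case $p + q < n$ obtained by applying \eqref{E:hd2} first.

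For \eqref{E:hd1}, observe that $F \in \check D$ forces $\tdim F^k/F^{k+1} = h^{k, n-k}$, and \eqref{E:dsFW} identifies $F^k/F^{k+1} \simeq \bigoplus_r I^{k, r}$, so $\sum_r \tdim I^{k, r} = h^{k, n-k}$; setting $k = n-q$ and relabeling $r = n-p$ in \eqref{E:hd3} gives $\sum_p \tdim I^{p, q} = \sum_r \tdim I^{n-q, r} = h^{n-q, q}$. For \eqref{E:hd4}, take $\ell = p + q + 2 - n \ge 2$; the block--diagonal analysis above provides an isomorphism $N^\ell : I^{p+1, q+1} \to I^{p+1-\ell, q+1-\ell}$, and factoring as $N^{\ell-1} \circ N$ forces $N : I^{p+1, q+1} \to I^{p, q}$ to be injective, yielding $\tdim I^{p+1, q+1} \le \tdim I^{p, q}$.

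The main obstacle is the block--diagonal decomposition used to prove \eqref{E:hd3} and \eqref{E:hd4}: one needs $N$ to respect the Deligne bigrading, which is precisely what the $\bR$--split reduction (together with the last line of the remark preceding \S\ref{S:vhs}) accomplishes. Once this is set up, the remaining properties follow from \eqref{E:dsFW}, the definition of $\bR$--splitness, and the $\fsl(2)$--representation theory of $N$ on the graded pieces (cf.~Remark \ref{R:sl2}).
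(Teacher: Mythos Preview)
Your proof is correct and follows essentially the same route as the paper's: \eqref{E:hd1} from $F\in\check D$ together with \eqref{E:dsFW}, \eqref{E:hd2} from the Hodge symmetry on the graded pieces (the paper cites \eqref{E:dsWl} directly rather than first passing to the $\bR$--split representative), and \eqref{E:hd3}--\eqref{E:hd4} from the Jacobson--Morosov isomorphism \eqref{E:Wgr-isom} read off bigraded components. Your preliminary reduction to the $\bR$--split case is harmless but not strictly needed, since \eqref{E:dsWl} already gives the conjugation symmetry and $N$ acts as a $(-1,-1)$--morphism on the graded Hodge structures without that assumption.
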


\noindent Note that the four conditions \eqref{SE:hd} imply that the Hodge diamond of a PMHS ``lies in'' the square $[0,n]\times[0,n]$; that is
\[
  \Diamond(p,q) \ \not= \ 0 \quad\hbox{implies}\quad 0 \,\le\, p,q \,\le\, n \,.
\]

\begin{proof}
The property \eqref{E:hd1} follows from $F \in \check D$ and the first equation of \eqref{E:dsFW}; property \eqref{E:hd2} is due to \eqref{E:dsWl}; and properties \eqref{E:hd3} and \eqref{E:hd4} to \eqref{E:Wgr-isom}.
\end{proof}

\begin{remark}
The Hodge diamond of an \emph{arbitrary} MHS (that is, a MHS that is not necessarily polarized) will satisfy \eqref{E:hd1} and \eqref{E:hd2}, but need not satisfy \eqref{E:hd3} and \eqref{E:hd4}.
\end{remark}

Given a PMHS $(F,\cN)$, we will denote the Hodge diamond by $\Diamond(F,\cN)$.  The following proposition asserts that (i) every non-negative function satisfying \eqref{SE:hd} may be realized as the Hodge diamond of an $\bR$--split PMHS, and (ii) the $\bR$--split PMHS on $D$ are classified, up to the action of $G$, by their Hodge diamonds.

\begin{theorem}[{\cite{KPR}}] \label{T:hd}
Any function $f: \bZ \times \bZ \to \bZ_{\ge0}$ satisfying \eqref{SE:hd} may be realized as the Hodge diamond $\Diamond(F,N)$ of an $\bR$--split polarized mixed Hodge structure $(F,N)$, $N\in\fg_\bR$, on the period domain $D$.  Moreover, $\Diamond(F_1,N_1) = \Diamond(F_2,N_2)$ if and only if $(F_2,N_2) = (g\cdot F_1, \tAd_g N_1)$ for some $g \in G$.
\end{theorem}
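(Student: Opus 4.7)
The plan is to prove both assertions by reducing $\bR$-split PMHS on $(V,Q)$ to $\fsl(2)$-representation data decorated by polarized Hodge structures on primitive pieces, then invoking homogeneity of the relevant automorphism groups on polarized Hodge structures with prescribed Hodge numbers.

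First, I would apply Remark \ref{R:sl2} to produce from an $\bR$-split PMHS $(F,N)$ with $N\in\fg_\bR$ an $\fsl(2)$-triple $\{N,Y,N^+\}\subset\fg_\bR$ such that $Y$ acts by $p+q-n$ on $I^{p,q}$. Standard $\fsl(2)$-theory decomposes $V_\bC$ into isotypic components $P(N)_\ell \ot V(\ell)_\bC$ for $\ell\ge 0$, where $V(\ell)$ is the $(\ell+1)$-dimensional irreducible $\fsl(2)$-module and the primitive subspaces $P^{p,q}_\ell\subset I^{p,q}$ at $p+q=n+\ell$ assemble into $P(N)_\ell$. PMHS axiom (iii) endows $P(N)_\ell$ with a $Q^N_\ell$-polarized weight-$(n+\ell)$ Hodge structure of Hodge numbers $a^{p,q}_\ell:=\tdim_\bC P^{p,q}_\ell$; conversely, the triple together with the decorated primitives reconstructs $(F,N)$ uniquely. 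Applying $N$ iteratively to primitives yields
\[
  f(p,q) \ = \ \tdim_\bC\,I^{p,q} \ = \ \textstyle{\sum_{j\ge0}}\,a^{p+j,\,q+j}_{p+q+2j-n}\,,
\]
so $a^{p,q}_\ell = f(p,q)-f(p+1,q+1)$ for $p+q=n+\ell\ge n$. The four conditions \eqref{SE:hd} correspond, in turn, to: \eqref{E:hd1} equating column sums with the Hodge numbers $\bh$ of $D$; \eqref{E:hd2} reflecting the reality $\overline{I^{p,q}}=I^{q,p}$; \eqref{E:hd3} reflecting the $\fsl(2)$-symmetry $N^\ell: W^\tgr_{n+\ell}\simeq W^\tgr_{n-\ell}$; and \eqref{E:hd4} expressing nonnegativity $a^{p,q}_\ell\ge0$.

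For realizability, given $f$ satisfying \eqref{SE:hd}, I would build on an abstract $\bQ$-space
\[
  V' \ := \ \bigoplus_{\ell\ge0}\,P'_\ell \,\ot_\bQ\, V_\bQ(\ell)
\]
with each $P'_\ell$ a $Q'_\ell$-polarized weight-$(n+\ell)$ Hodge structure whose Hodge numbers are $a^{p,q}_\ell$ (existence is classical, given nonnegativity and \eqref{E:hd2}), and $V_\bQ(\ell)$ the $\bQ$-form of the $(\ell+1)$-dimensional irreducible $\fsl(2)$-module with its standard invariant bilinear form $q_\ell$. Setting $Q' := \bigoplus_\ell Q'_\ell\ot q_\ell$, the pair $(V',Q')$ inherits a canonical $\bR$-split PMHS $(F',N')$ with diamond $f$. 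Condition \eqref{E:hd1} matches $\tdim V'$ and the signature (resp.~symplectic type) of $Q'$ with those of $(V,Q)$, so a $\bQ$-isomorphism $(V',Q')\simeq(V,Q)$ transports $(F',N')$ onto $(V,Q)$.

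For uniqueness, suppose $(F_1,N_1)$ and $(F_2,N_2)$ share the diamond $f$. Their primitive decompositions then match in multiplicity, and the polarized Hodge structures on $P(N_1)_\ell$ and $P(N_2)_\ell$ share Hodge numbers and polarization type. Since polarized Hodge structures with prescribed Hodge numbers form a homogeneous space under their automorphism group, I obtain polarization-preserving isomorphisms $P(N_1)_\ell \simeq P(N_2)_\ell$ for each $\ell$; equivariantly assembling these across the $V(\ell)$-factors produces $g\in G$ with $g\cdot(F_1,N_1)=(F_2,N_2)$. The hard part will be realizability, specifically verifying that the bilinear form $Q'$ built from the $\fsl(2)$-blocks is $\bQ$-isomorphic to the given $Q$ on $V$; this reduces to matching the classical invariants (rank and signature, or symplectic type), both encoded in $f$ via \eqref{E:hd1}.
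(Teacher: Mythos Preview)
Your approach is essentially correct and closely parallels what the paper's one-line proof points to.  The paper invokes the classification of nilpotent $G$--orbits in $\fg_\bR$ by signed Young diagrams and the observation that the Hodge diamond determines the signed Young diagram; your $\fsl(2)$--isotypic decomposition together with the signatures of the $Q^N_\ell$ on the primitives $P(N)_\ell$ \emph{is} precisely the signed Young diagram data, so at bottom the two arguments coincide.  Your version has the advantage of treating the pair $(F,N)$ in one stroke (via polarized Hodge structures on the primitives and transitivity on each $D_\ell$), whereas the signed-Young-diagram route first pins down the orbit of $N$ and then must separately account for $F$.

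One genuine correction: you should work over $\bR$, not $\bQ$, when identifying $(V',Q')$ with $(V,Q)$.  For odd $n$ this is harmless, but for even $n$ the form $Q$ is symmetric, and over $\bQ$ rank and signature do \emph{not} classify quadratic forms (discriminant and Hasse--Witt invariants intervene).  Since the theorem only asks for $g\in G=\tAut(V_\bR,Q)$ and $N\in\fg_\bR$, an $\bR$--isometry suffices, and over $\bR$ rank and signature do classify.  The signature computation you flag as ``the hard part'' is then exactly the verification that the signature of $\bigoplus_\ell Q'_\ell\otimes q_\ell$ depends only on the column sums $\bh$; this is a straightforward (if slightly tedious) consequence of the Hodge--Riemann sign rules on each $P'_\ell$ together with the known signature of the $\fsl(2)$--invariant form $q_\ell$ on $V(\ell)$.
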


\noindent The proof is essentially a consequence of the classification of nilpotent $N \in \fg_\bR$ by ``signed Young diagrams," and the fact that the latter are determined by the Hodge diamonds; see \cite{KPR} for details.

\begin{remark}
By virtue of the equivalence between $\bR$--split PMHS and horizontal $\tSL(2)$--orbits on $D$, Theorem \ref{T:hd} is also a classification $G(\bR)$--conjugacy classes of horizontal $\tSL(2)$--orbits on period domains.
\end{remark}

\begin{remark} \label{R:polorb}
There is an interesting relationship between PMHS and the topological boundary $\partial(D) \subset \check D$ of $D$.  First, recall (Remark \ref{R:asymlim}) that the asymptotic limit
\[
  \Phi_\infty(N,F) \ = \ 
  \lim_{\mystack{\tIm\,z\to\infty}{\tRe\,z \ \mathrm{bdd}}} 
  \exp(z N) \cdot F \ \in \ \partial(D)
\]
of the nilpotent orbit lies in the boundary if $N\not=0$.  Note that $\partial(D)$ decomposes into a disjoint union of $G$ orbits, and the map $\Phi_\infty$ is $G$--equivariant.  Thus, from Theorem \ref{T:hd}, we obtain an induced map $\Phi_\infty$ from the set $\Diamond(D)$ of Hodge diamonds to the set $\cO(D)$ of $G$--orbits in $\overline D$.  This map is injective \cite{KPR}.  In particular, the Hodge diamonds index the ``polarizable" $G$--orbits in $\overline D$.  In \cite{GGR} the natural closure relations between these orbits are used to define the notion of an extremal degeneration of Hodge structures.  Theorem \ref{T:PDpr} (and more generally the results of \cite{KPR}) may be viewed as refining, or ``filling-in", results of \cite{GGR}.
\end{remark}

We finish this section by giving a number of examples illustrating Theorem \ref{T:hd}.  In each of the examples that follows we fix a period domain $D$ (by specifying the Hodge numbers and fixing a polarization $Q$), and apply Theorem \ref{T:hd} to list the Hodge diamonds.  The diamonds are represented by labeled configurations of points in the $pq$--plane: the node at $(p,q)$ is labeled with the (nonzero) value of $\Diamond(p,q)$.

%------------------------------------------------------------------------------
\subsubsection*{Notation}
%------------------------------------------------------------------------------
The following notation will be used to characterize the flags $F^\sb \in \check D$ realizing a given Hodge diamond in the examples below.  Observe that 
\[
  Q^*(\cdot,\cdot) \ := \ \bi^n\,Q(\cdot , \bar\cdot)
\]
defines a nondegenerate Hermitian form on $V_\bC$.  Given a subspace $E \subset V_\bC$, let 
\[
  E_0 \ := \ \{ v \in E \ | \ Q^*(v,E) = 0 \} \,.
\]
Notice that $Q^*$ induces a nondegenerate Hermitian form $Q^*_0$ on $E/E_0$.  We will write $Q^*_0 > 0$ when this form is positive definite.

\begin{example}[Hodge numbers $\bh = (g,g)$] \label{eg:gg}
There are $g+1$ Hodge diamonds, which we denote $\Diamond_r$, with $0 \le r \le g$, and picture as
\begin{center}
\begin{tikzpicture}
\draw [<->] (0,1.5) -- (0,0) -- (1.5,0);
\draw [gray] (1,0) -- (1,1);
\draw [gray] (0,1) -- (1,1);
\draw [fill] (0,0) circle [radius=0.08];
\node [left] at (0,0) {$r$};
\draw [fill] (0,1) circle [radius=0.08];
\node [left] at (0,1) {$g-r$};
\draw [fill] (1,0) circle [radius=0.08];
\node [above right] at (1,0) {$g-r$};
\draw [fill] (1,1) circle [radius=0.08];
\node [right] at (1,1) {$r$};
\end{tikzpicture}
\end{center}
In this case $Q$ is skew-symmetric, and $Q^*(\cdot , \cdot) = \bi \,Q(\cdot,\bar\cdot)$ is a nondegenerate Hermitian form on $V_\bC \simeq \bC^{2g}$.  The flags $F^\sb = (F^1) \in \check D$ consist of a single subspace and the compact dual $\check D = \tGr^Q(g,V_\bC)$ is the Lagrangian grassmannian.  The subspaces $E \in \check D$ realizing the Hodge diamond $\Diamond_r$ form a $G$--orbit 
\[
  \cO_r \ := \ \{ E \in \tGr^Q(g,V_\bC) \ | \ E_0 = E \cap \overline E \,,
  \tdim\,E_0 = r \,,\ Q^*_0 > 0 \} \,.
\]
Note that $D = \cO_0$ and $\cO_s \subset \overline\cO_r$ if and only if $r \le s$.
\end{example}

\begin{remark}
Recall Figure \ref{fig:g2} and the associated limit mixed Hodge structures (which are PMHS).  Here we have $g=2$, and the Hodge diamond for LMHS$(t_1,0)$ and LMHS$(0,t_2)$, with $t_1t_2\not=0$, is $\Diamond_1$; likewise, the Hodge diamond for LMHS$(0,0)$ is $\Diamond_2$.
\end{remark}

\begin{example}[Hodge numbers $\bh = (1,a,1)$] \label{eg:1a1}
The Hodge diamonds are 
\begin{center}
\begin{tikzpicture}[scale=0.8]
\begin{scope}[xshift=-5cm]
  \draw [<->] (0,2.5) -- (0,0) -- (2.5,0);
  \draw [gray] (1,0) -- (1,2);
  \draw [gray] (2,0) -- (2,2);
  \draw [gray] (0,1) -- (2,1);
  \draw [gray] (0,2) -- (2,2);
  \draw [fill] (0,2) circle [radius=0.08];
  \node [left] at (0,2) {$1$};
  \draw [fill] (1,1) circle [radius=0.08];
  \node [above right] at (1,1) {$a$};
  \draw [fill] (2,0) circle [radius=0.08];
  \node [above right] at (2,0) {$1$};
\end{scope}
  \draw [<->] (0,2.5) -- (0,0) -- (2.5,0);
  \draw [gray] (1,0) -- (1,2);
  \draw [gray] (2,0) -- (2,2);
  \draw [gray] (0,1) -- (2,1);
  \draw [gray] (0,2) -- (2,2);
  \draw [fill] (0,1) circle [radius=0.08];
  \node [left] at (0,1) {$1$};
  \draw [fill] (1,2) circle [radius=0.08];
  \node [above right] at (1,2) {$1$};
  \draw [fill] (1,1) circle [radius=0.08];
  \node [above] at (1,1) {$a-2$};
  \draw [fill] (1,0) circle [radius=0.08];
  \node [above right] at (1,0) {$1$};
  \draw [fill] (2,1) circle [radius=0.08];
  \node [right] at (2,1) {$1$};
\begin{scope}[xshift=5cm]
  \draw [<->] (0,2.5) -- (0,0) -- (2.5,0);
  \draw [gray] (1,0) -- (1,2);
  \draw [gray] (2,0) -- (2,2);
  \draw [gray] (0,1) -- (2,1);
  \draw [gray] (0,2) -- (2,2);
  \draw [fill] (2,2) circle [radius=0.08];
  \node [above right] at (2,2) {$1$};
  \draw [fill] (1,1) circle [radius=0.08];
  \node [below right] at (1,1) {$a$};
  \draw [fill] (0,0) circle [radius=0.08];
  \node [left] at (0,0) {$1$};
\end{scope}
\end{tikzpicture}
\end{center}
Here the second diamond arises only if $a \ge 2$, and the third only if $a \ge 1$.
\end{example}

\begin{example}[Hodge numbers $\bh=(b,a,b)$] \label{eg:bab}
The Hodge diamonds $\Diamond_{r,s}$ are indexed by $0\le r,s$ satisfying $r+s \le b$ and $r+2s \le a$.
\begin{center}
\begin{tikzpicture}
  \draw [<->] (0,2.5) -- (0,0) -- (2.5,0);
  \draw [gray] (1,0) -- (1,2);
  \draw [gray] (2,0) -- (2,2);
  \draw [gray] (0,1) -- (2,1);
  \draw [gray] (0,2) -- (2,2);
  \draw [fill] (0,2) circle [radius=0.08];
  \node [left] at (0,2) {$b-r-s$};
  \draw [fill] (0,1) circle [radius=0.08];
  \node [left] at (0,1) {$s$};
  \draw [fill] (0,0) circle [radius=0.08];
  \node [left] at (0,0) {$r$};
  \draw [fill] (1,2) circle [radius=0.08];
  \node [above] at (1,2) {$s$};
  \draw [fill] (1,1) circle [radius=0.08];
  \node [above] at (1,1) {$a-2s$};
  \draw [fill] (1,0) circle [radius=0.08];
  \node [above right] at (1,0) {$s$};
  \draw [fill] (2,2) circle [radius=0.08];
  \node [above right] at (2,2) {$r$};
  \draw [fill] (2,1) circle [radius=0.08];
  \node [above right] at (2,1) {$s$};
  \draw [fill] (2,0) circle [radius=0.08];
  \node [above right] at (2,0) {$b-r-s$};
\end{tikzpicture}
\end{center}
In this case $Q$ is symmetric, and $Q^*(\cdot , \cdot) = -Q(\cdot,\bar\cdot)$ defines a nondegenerate Hermitian form on $V_\bC \simeq \bC^{a+2b}$.  The flags $(F^2 \subset F^1) \in \check D = \tFlag^Q(b,a+b,V_\bC)$ satisfy $F^1 = (F^2)^\perp$; that is, the flag $F^\sb$ is completely determined by the first subspace $F^2$, so that $\check D \simeq \tGr^Q(b,\bC^{a+2b})$.  The flags realizing the Hodge diamond $\Diamond_{r,s}$ form a $G$--orbit $\cO_{r,s} \in \overline D$ that is characterized by 
\[
  \cO_{r,s} \ = \ \left\{
    E \in \tGr^Q(b,\bC^{a+2b}) \ \left| \ 
    \tdim\,E\cap\overline E = r \,,\ \tdim\,E_0 = r+s \,,\ Q^*_0 > 0
  \right.\right\}
\]
Note that $D = \cO_{0,0}$ and $\cO_{t,u} \subset \overline\cO_{r,s}$ if and only if $r \le t$ and $r+s \le t+u$.
\end{example}

%------------------------------------------------------------------------------
\subsection{Degeneracy relations between $\bR$--split PMHS} \label{S:classpr}
%------------------------------------------------------------------------------

%------------------------------------------------------------------------------
\subsubsection{Polarized relations on Hodge diamonds}
%------------------------------------------------------------------------------

Schmid's Nilpotent Orbit Theorem \ref{T:norb} provides the link between the geometry and the Hodge theory.  Specifically, the lift $\tilde\Phi : \sH \times \sH \to D$ of any period map $\Phi : \Delta^* \times \Delta^* \to \Gamma\backslash D$ with unipotent monodromies is a approximated by a two-variable nilpotent orbit 
\begin{equation}\label{E:pr}
  \theta(z_1,z_2) \ = \ \exp(z_1 N_1 + z_2 N_2) \cdot F\,.
\end{equation}
We may assume, without loss of generality, that the associated PMHS $(F,\s)$, with $\s = \{ x_1 N_1 + x_2 N_2 \ | \ x_j > 0 \}$ the underlying nilpotent orbit, is $\bR$--split (Theorem \ref{T:cks}).  Note that $z_1 \mapsto \theta(z_1 , \bi) = \exp(z_1 N_1) e^{\bi N_2} \cdot F$ is a one-variable nilpotent orbit with corresponding PMHS $(e^{\bi N_2} \cdot F , N_1)$.  Fixing $z_2 = \bi \in \sH$ and letting $\tIm\,z_1 \to \infty$ corresponds to fixing $t_2 \in \Delta^*$ and letting $t_1 \to 0$.  So it is natural to regard the PMHS $(e^{\bi N_2} \cdot F , N_1)$ as ``less degenerate'' than $(F,N)$.  (Cf. Figure \ref{fig:g2} and the related discussion.)  This motivates the following definition: given \emph{any} $\bR$--split two-variable nilpotent orbit \eqref{E:pr}, let $(F_1,N_1)$ be the $\bR$--split PMHS associated to $(e^{\bi N_2} \cdot F , N_1)$ as in \S\ref{S:ds}.  Then we say the corresponding Hodge diamonds satisfy the \emph{polarized relation} 
\[
  \Diamond(F_1 , N_1) \ \preceq \ \Diamond(F,N) \,.
\]
The polarized relations are classified in Theorem \ref{T:PDpr}.  The classification requires the notion of a ``primitive Hodge diamond.''

\begin{remark}
Recall the map $\Phi_\infty : \Diamond(D) \to \cO(D)$ of Remark \ref{R:polorb}.  Observe that the $G$--orbit $\cO(F,N) = \Phi_\infty(\Diamond(F,N)) \subset \overline D$ is contained in the closure of $\cO(F_1,N_1) = \Phi_\infty(\Diamond(F_1,N_1))$.  In particular, if we define a partial order on the set $\cO(D)$ of $G$--orbits in $\overline D$ by $\cO_1 \le \cO$ if $\cO \subset \overline\cO_1$, then the map $\Phi_\infty$ preserves the two relations.  (However, beware that the polarized relation on Hodge diamonds is not, in general, a partial order: transitivity may fail.  See Example \ref{eg:1aa1}.)
\end{remark}

%------------------------------------------------------------------------------
\subsubsection{Primitive subspaces} \label{S:prim}
%------------------------------------------------------------------------------

Fix a $\bR$--split PMHS $(F,N)$, and let $V_\bC = \op I^{p,q}$ be the Deligne splitting (\S\ref{S:ds}).  Set
\begin{subequations}\label{SE:Pell}
\begin{equation} \label{E:Ppq}
  P(N)^{p,q} \ := \ \tker\{ N^{\ell+1} : I^{p,q} \to I^{-p-1,-q-1} \} \,,
\end{equation}
and define the \emph{weight $n+\ell$ $N$--primitive subspace}
\begin{equation}
  P(N)_{n+\ell,\bC} \ := \ 
  \displaystyle\bigoplus_{p+q=n+\ell} P(N)^{p,q} \,.
\end{equation}
\end{subequations}
Since $(F,N)$ is $\bR$--split we see that the $P(N)_{n+\ell}$ is defined over $\bR$.  Moreover, from the second equation of \eqref{E:dsFW} and Remark \ref{R:sl2} it may be deduced that
\begin{equation}\label{E:Ndecomp}
  V_\bR \ = \ 
  \bigoplus_{\mystack{0\le \ell}{0 \le a \le \ell}} N^a P(N)_{n+\ell} \,.
\end{equation}
In particular, the decomposition \eqref{SE:Pell} determines the Deligne bigrading $V_\bC = \op\,I^{p,q}$ of $(F,W(N))$.  Moreover, \eqref{E:Ppq} is a weight $n+\ell$ Hodge decomposition of $P(N)_{\ell,\bR}$ polarized by 
\[
  Q^N_\ell(\cdot,\cdot) \ := \ Q(\cdot , N^\ell\cdot)\,.
\]
The \emph{$N$--primitive Hodge--Deligne numbers} are the 
\[
  j^{p,q} \ := \ \tdim_\bC\,P(N)^{p,q}\,.
\]
The \emph{weight $n+\ell$ primitive part} of $\Diamond(F,N)$ is the function
\[
  \Diamond^\tprim_{n+\ell}(F,N) : \bZ \times \bZ \ \to \ \bZ_{\ge0}
  \quad\hbox{sending}\quad
  (p,q) \ \mapsto \ j^{p,q} \,.
\]
Likewise, the \emph{primitive part} of $\Diamond(F,N)$ is the sum
\[
  \Diamond^\tprim(F,N) \ = \ \sum_{\ell=0}^n
  \Diamond^\tprim_{n+\ell}(F,N)
\] 
of the weight $k$ primitive Hodge sub-diamonds.  Note that $\Diamond^\tprim_{n+\ell}(F,N)$ not a Hodge diamond: \eqref{E:hd3} and \eqref{E:hd4} will fail whenever $N\not=0$.  We will call any such $\Diamond^\tprim(F,N)$ a \emph{primitive sub-diamond for the period domain $D$}.  From \eqref{E:Ndecomp} we see that 
\begin{equation}\label{E:pHD}
  \hbox{\emph{$\Diamond^\tprim(F,N)$ determines 
  $\Diamond(F,N)$ (and visa versa).}}
\end{equation}
To be more precise, given $f : \bZ \times \bZ \to \bZ_{\ge0}$ define $f[\ell]: \bZ \times \bZ \to \bZ_{\ge0}$ by $(p,q) \mapsto f(p+\ell,q+\ell)$.  Then \eqref{E:Ndecomp} implies
\begin{equation}\nonumber%\label{E:DvDp}
  \Diamond(F,N) \ = \ \sum_{\mystack{0 \le \ell}{0 \le a\le \ell}} 
  \Diamond^\tprim_{n+\ell}(F,N)[a] \,.
\end{equation}
From Theorem \ref{T:hd} we then obtain

\begin{corollary}\label{C:hd}
The $G$--conjugacy class of an $\bR$--split PMHS $(F,N)$ on $D$ is determined by the primitive sub-diamond $\Diamond^\tprim(F,N)$.
\end{corollary}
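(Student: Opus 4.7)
The plan is to deduce the corollary directly from Theorem \ref{T:hd} by establishing the identity \eqref{E:pHD}: for an $\bR$--split PMHS on $D$, the Hodge diamond and the primitive sub-diamond determine one another. Granted this, the chain ``equal primitive sub-diamonds $\Rightarrow$ equal Hodge diamonds $\Rightarrow$ $G$--conjugate'' is immediate.

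First I would verify that $\Diamond^\tprim(F,N)$ is a $G$--invariant of the $\bR$--split PMHS. The Deligne splitting is $G$--equivariant, $I^{p,q}_{g\cdot W,\,g\cdot F} = g\cdot I^{p,q}_{W,F}$, and the restriction of $\tAd_g N$ to $g\cdot I^{p,q}$ is $g\circ (N|_{I^{p,q}})\circ g^{-1}$. Consequently $P(\tAd_g N)^{p,q}_{g\cdot F} = g\cdot P(N)^{p,q}_{F}$, so the primitive Hodge--Deligne numbers $j^{p,q}$ are preserved under the $G$--action.

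The main step is the dimension identity
\[
  \Diamond(F,N)(p,q) \ = \ \sum_{a\ge \max(0,\,n-p-q)} j^{p+a,\,q+a}\,.
\]
To prove it, attach to $N$ an $\fsl_2$--triple $(N,Y,N^+)$ as in Remark \ref{R:sl2}. The resulting $\fsl_2$--module structure on $V_\bC$ gives the primitive decomposition
\[
  V_\bC \ = \ \bigoplus_{0\le a\le \ell}\, N^a\, P(N)_{n+\ell,\bC}\,.
\]
Since $N$ sends $I^{p,q}$ into $I^{p-1,q-1}$ and $P(N)^{p,q}\subset I^{p,q}$, this decomposition refines compatibly with the Deligne bigrading to $I^{p,q} = \bigoplus_a N^a P(N)^{p+a,q+a}$, with the index $a$ ranging over the nonnegative integers for which $(p+a)+(q+a)\ge n$. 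Taking dimensions yields the displayed formula. Thus $\Diamond^\tprim(F,N)$ determines $\Diamond(F,N)$, and conversely one reads $j^{p,q}$ off $\Diamond(F,N)$ by antidiagonal M\"obius inversion.

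The corollary follows: if $(F_1,N_1)$ and $(F_2,N_2)$ have equal primitive sub-diamonds, the formula forces $\Diamond(F_1,N_1) = \Diamond(F_2,N_2)$, whence Theorem \ref{T:hd} yields $g\in G$ with $(F_2,N_2) = (g\cdot F_1,\tAd_g N_1)$; the reverse implication is the $G$--invariance established above. The classification work is all concentrated in Theorem \ref{T:hd}, so I do not expect a genuine obstacle here; the only content is the $\fsl_2$--refinement of the Deligne bigrading, a routine bookkeeping argument from the representation theory already invoked in Remark \ref{R:sl2}.
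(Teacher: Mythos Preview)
Your proposal is correct and follows essentially the same route as the paper: use the primitive decomposition \eqref{E:Ndecomp} to show that $\Diamond^\tprim(F,N)$ determines $\Diamond(F,N)$ (your pointwise formula is exactly the paper's displayed identity $\Diamond(F,N) = \sum_{\ell,a}\Diamond^\tprim_{n+\ell}(F,N)[a]$ unpacked at a fixed $(p,q)$), and then invoke Theorem~\ref{T:hd}. The paper's ``proof'' is simply the sentence ``From Theorem~\ref{T:hd} we then obtain,'' so you have filled in precisely the bookkeeping the paper leaves implicit, including the (routine) $G$--invariance of the primitive Hodge--Deligne numbers for the converse direction.
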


\begin{example}[Hodge numbers $\bh = (g,g)$] \label{eg:ggprim}
The primitive Hodge diamond for the $\bR$--split PMHS $(F_r,N_r)$ with Hodge diamond $\Diamond_r$ of Example \ref{eg:gg} is 
\begin{center}
\begin{tikzpicture}
\draw [<->] (0,1.5) -- (0,0) -- (1.5,0);
\draw [gray] (1,0) -- (1,1);
\draw [gray] (0,1) -- (1,1);
%\draw [fill] (0,0) circle [radius=0.08];
%\node [left] at (0,0) {$r$};
\draw [fill] (0,1) circle [radius=0.08];
\node [left] at (0,1) {$g-r$};
\draw [fill] (1,0) circle [radius=0.08];
\node [above right] at (1,0) {$g-r$};
\draw [fill] (1,1) circle [radius=0.08];
\node [right] at (1,1) {$r$};
\end{tikzpicture}
\end{center}
\end{example}

\begin{example}[Hodge numbers $\bh = (b,a,b)$] \label{eg:babprim}
The primitive Hodge diamond for the $\bR$--split PMHS $(F_{r,s},N_{r,s})$ with Hodge diamond $\Diamond_{r,s}$ of Example \ref{eg:bab} is 
\begin{center}
\begin{tikzpicture}
  \draw [<->] (0,2.5) -- (0,0) -- (2.5,0);
  \draw [gray] (1,0) -- (1,2);
  \draw [gray] (2,0) -- (2,2);
  \draw [gray] (0,1) -- (2,1);
  \draw [gray] (0,2) -- (2,2);
  \draw [fill] (0,2) circle [radius=0.08];
  \node [left] at (0,2) {$b-r-s$};
%  \draw [fill] (0,1) circle [radius=0.08];
%  \node [left] at (0,1) {$s$};
%  \draw [fill] (0,0) circle [radius=0.08];
%  \node [left] at (0,0) {$r$};
  \draw [fill] (1,2) circle [radius=0.08];
  \node [above] at (1,2) {$s$};
  \draw [fill] (1,1) circle [radius=0.08];
  \node [below left] at (1,1) {$a-r-2s$};
%  \draw [fill] (1,0) circle [radius=0.08];
%  \node [above right] at (1,0) {$s$};
  \draw [fill] (2,2) circle [radius=0.08];
  \node [above right] at (2,2) {$r$};
  \draw [fill] (2,1) circle [radius=0.08];
  \node [above right] at (2,1) {$s$};
  \draw [fill] (2,0) circle [radius=0.08];
  \node [above right] at (2,0) {$b-r-s$};
\end{tikzpicture}
\end{center}
\end{example}

\begin{theorem}[{\cite{KPR}}] \label{T:PDpr}
Let $D$ be a period domain parameterizing weight $n$, $Q$--polarized Hodge structures on $V_\bR$.  Let $[F_1,N_1], [F_2,N_2] \in \Psi_D$.  Then $[F_1,N_1] \preceq [F_2,N_2]$ if and only if $\Diamond(F_2,N_2)$ can be expressed as a sum 
\[
  \Diamond(F_2,N_2) \ = \ \sum_{\mystack{0\le k}{0 \le a \le \ell}}  
  \Diamond(F'_\ell,N'_\ell)[a]
\]
for Hodge diamonds $\Diamond(F'_\ell,N'_\ell)$ on the period domains $D_\ell$ parameterizing weight $n+\ell$, $Q^{N_1}_\ell$--polarized Hodge structures on $P(N_1)_{n+\ell}$ with Hodge numbers $\{j_1^{p,q} \ | \ p+q = n+\ell \}$.
\end{theorem}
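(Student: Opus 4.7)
The plan is to interpret the polarized relation $[F_1,N_1]\preceq[F_2,N_2]$ through the $N_1$-primitive decomposition \eqref{E:Ndecomp} of $V_\bR$, and to recognize the decomposition of $\Diamond(F_2,N_2)$ as the combinatorial shadow of the two-variable $\bR$-split nilpotent orbit witnessing the relation.

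For the forward direction I would start from an $\bR$-split two-variable nilpotent orbit $\exp(z_1N_1+z_2N_2)\cdot F$ realizing the relation, with cone $\sigma=\{x_1N_1+x_2N_2:x_j>0\}$, $[F_2,N_2]=[F,N_1+N_2]$, and $[F_1,N_1]$ obtained from the $\bR$-splitting of $(e^{\bi N_2}F,N_1)$. Since $[N_1,N_2]=0$, the operator $N_2$ preserves each $N_1$-primitive subspace $P(N_1)_{n+\ell}$, and the key input (Theorem \ref{T:cks} combined with the $\fsl(2)$-theory of Remark \ref{R:sl2}) shows that its restriction defines an $\bR$-split PMHS $(F'_\ell,N'_\ell)$ on $P(N_1)_{n+\ell}$ of weight $n+\ell$ polarized by $Q^{N_1}_\ell$, with Hodge numbers equal to the primitive Hodge-Deligne numbers $j_1^{p,q}$ of $(F_1,N_1)$. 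The Deligne bigrading of $(F,W(N_1+N_2))$ is then compatible with the primitive decomposition, giving an identification $I^{p,q}(F,W(N_1+N_2))\simeq\bigoplus_{\ell,\,0\le a\le\ell}N_1^a\cdot I^{p-a,q-a}(F'_\ell,W(N'_\ell))$, and counting dimensions yields the identity $\Diamond(F_2,N_2)=\sum_{\ell,a}\Diamond(F'_\ell,N'_\ell)[a]$.

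For the backward direction I would assume the decomposition is given and build a witnessing two-variable orbit by assembly. First, realize each summand $\Diamond(F'_\ell,N'_\ell)$ as an $\bR$-split PMHS on $D_\ell$ via Theorem \ref{T:hd}. Next, form $V=\bigoplus_{\ell,\,0\le a\le\ell}V'_{\ell,a}$ with each $V'_{\ell,a}$ a copy of $P(N_1)_{n+\ell,\bR}$, polarized naturally by the $Q^{N_1}_\ell$; define $\tilde N_1$ as the identity shift $V'_{\ell,a}\to V'_{\ell,a-1}$ (zero on $V'_{\ell,0}$), define $\tilde N_2$ on each $V'_{\ell,a}$ via $N'_\ell$, and assemble $F$ from the $F'_\ell$. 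The main obstacle will be verifying that $(F,\{\tilde N_1,\tilde N_2\})$ is an $\bR$-split PMHS polarizing the full cone $\tilde\sigma=\{x_1\tilde N_1+x_2\tilde N_2:x_j>0\}$: in particular, that all elements of $\tilde\sigma$ induce the same Jacobson-Morosov filtration, and that $F$ simultaneously polarizes every primitive subspace for every element of $\tilde\sigma$. This is where the commuting-$\fsl(2)$ theory (Remark \ref{R:sl2}) and the Cattani-Kaplan independence criterion enter, and where the shifts $[a]$ and the matching of Hodge numbers to the $j_1^{p,q}$ are indispensable. Once verified, Theorem \ref{T:cks} produces a two-variable nilpotent orbit $\exp(z_1\tilde N_1+z_2\tilde N_2)\cdot F$, and Corollary \ref{C:hd} together with Theorem \ref{T:hd} identify the resulting $\bR$-splittings with the prescribed classes $[F_1,N_1]$ and $[F_2,N_2]$.
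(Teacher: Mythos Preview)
Your overall architecture---forward via the $N_1$-primitive decomposition, backward via assembly on $\bigoplus_{\ell,a}V'_{\ell,a}$---is the right one, and is essentially the route taken in \cite{KPR}.  The gap is in the forward direction, where you misidentify the key input.

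You write that ``Theorem~\ref{T:cks} combined with the $\fsl(2)$-theory of Remark~\ref{R:sl2}'' shows that the restriction of $N_2$ to $P(N_1)_{n+\ell}$ defines an $\bR$-split PMHS polarized by $Q^{N_1}_\ell$.  Neither of those references delivers this.  Theorem~\ref{T:cks} only converts between nilpotent orbits and PMHS on the full $V$; Remark~\ref{R:sl2} only produces a single $\fsl(2)$-triple $\{N_1^+,Y_1,N_1\}$ through $N_1$.  The claim that $N_2$ \emph{preserves} $P(N_1)_{n+\ell}$ already fails from $[N_1,N_2]=0$ alone: the primitive subspace is the $Y_1$-highest-weight space, and for $N_2$ to respect it you need $N_2$ to commute with $Y_1$ (equivalently, with the full $\fsl(2)$ through $N_1$), not just with $N_1$.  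The further claim that the induced structure on $P(N_1)_{n+\ell}$ is a $Q^{N_1}_\ell$-\emph{polarized} MHS with the correct Hodge numbers $j_1^{p,q}$ is deeper still.

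What actually supplies both facts is the Cattani--Kaplan--Schmid Several Variable $\tSL(2)$--Orbit Theorem \cite{\CKSdeg}, which from the two-variable nilpotent orbit produces a pair of \emph{commuting} $\fsl(2)$-triples and a compatible limiting Hodge structure.  The commuting triples give the invariance of $P(N_1)_{n+\ell}$ under $N_2$ and the bigrading compatibility you assert; the polarization statements on the primitive pieces are part of the same package.  This is precisely why the paper's proof cites that theorem as the essential input.  Your backward direction is fine in outline, but note that the verification you flag as ``the main obstacle'' again reduces to checking the hypotheses under which the CKS machinery runs, so the same theorem is doing the work on both sides.
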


\begin{proof}
The theorem is a consequence of the Cattani--Kaplan--Schmid Several Variable $\tSL(2)$--Orbit Theorem \cite{\CKSdeg}.  See \cite{KPR} for details.
\end{proof}

We finish this section by giving a number of examples illustrating Theorem \ref{T:PDpr}.

\begin{example}[Hodge numbers $\bh = (g,g)$] \label{eg:gg'}
The polarized relations among the Hodge diamonds in Example \ref{eg:gg} are $\Diamond_s \prec \Diamond_r$ if and only if $s > r$.  In particular, in this case $\prec$ is a linear order.  Moreover, the map $\Phi_\infty: \Diamond(D) \to \cO(D)$ of Remark \ref{R:polorb} sending $\Diamond_r \mapsto \cO_r$ is a bijection preserving the orders.
\end{example}

\begin{remark}\label{R:gg}
Notice that each of the polarized relations in Example \ref{eg:gg'} can be realized geometrically.  For example, if $g=2$, then the polarized relations $\Diamond_0 \prec \Diamond_1 \prec \Diamond_2$ can be realized by degenerating a curve of genus two so that it acquires one, and then two, nodes.
\begin{center}
\begin{tikzpicture}[scale=0.6]
% Smooth genus 2 curve
\draw[thick] (0,0) to [out=90,in=180] (1,1)
  to [out=0,in=180] (2,0.5)
  to [out=0,in=180] (3,1)
  to [out=0,in=90] (4,0)
  to [out=270,in=0] (3,-1)
  to [out=180,in=0] (2,-0.5)
  to [out=180,in=0] (1,-1)
  to [out=180,in=270] (0,0);
% Hole
\draw (1,0.3) to [out=240,in=120] (1,-0.3);
\draw (1,0.3) to [out=60,in=50] (1.1,0.35);
\draw (1,-0.3) to [out=300,in=310] (1.1,-0.35);
\draw (1,0.3) to [out=310,in=40] (1,-0.3);
% Hole
\draw (3,0.3) to [out=240,in=120] (3,-0.3);
\draw (3,0.3) to [out=60,in=50] (3.1,0.35);
\draw (3,-0.3) to [out=300,in=310] (3.1,-0.35);
\draw (3,0.3) to [out=310,in=40] (3,-0.3);
% Arrow
\draw[->] (4.5,0) -- (5.5,0);
% Genus 2 curve with double point
\draw[thick] (6,0) to [out=90,in=180] (7,1)
  to [out=0,in=180] (8,0.5)
  to [out=0,in=180] (9,1)
  to [out=0,in=90] (10,0)
  to [out=270,in=0] (9,-1)
  to [out=180,in=0] (8,-0.5)
  to [out=180,in=30] (7,-1)
  to [out=150,in=270] (6,0);
% Node
\draw (7,-1) to [out=130,in=240] (6.7,-0.2)
  to [out=60,in=240] (6.8,0);
\draw (7,-1) to [out=50,in=300] (7.3,-0.2)
  to [out=120,in=300] (7.2,0);
\draw (6.7,-0.2) to [out=20,in=160] (7.3,-0.2);
% Hole
\draw (9,0.3) to [out=240,in=120] (9,-0.3);
\draw (9,0.3) to [out=60,in=50] (9.1,0.35);
\draw (9,-0.3) to [out=300,in=310] (9.1,-0.35);
\draw (9,0.3) to [out=310,in=40] (9,-0.3);
% Arrow
\draw[->] (10.5,0) -- (11.5,0);
% Genus 2 curve with two double points
\draw[thick] (12,0) to [out=90,in=180] (13,1)
  to [out=0,in=180] (14,0.5)
  to [out=0,in=180] (15,1)
  to [out=0,in=90] (16,0)
  to [out=270,in=30] (15,-1)
  to [out=150,in=0] (14,-0.5)
  to [out=180,in=30] (13,-1)
  to [out=150,in=270] (12,0);
% Node
\draw (13,-1) to [out=130,in=240] (12.7,-0.2)
  to [out=60,in=240] (12.8,0);
\draw (13,-1) to [out=50,in=300] (13.3,-0.2)
  to [out=120,in=300] (13.2,0);
\draw (12.7,-0.2) to [out=20,in=160] (13.3,-0.2);
% Hole
\draw (15,-1) to [out=130,in=240] (14.7,-0.2)
  to [out=60,in=240] (14.8,0);
\draw (15,-1) to [out=50,in=300] (15.3,-0.2)
  to [out=120,in=300] (15.2,0);
\draw (14.7,-0.2) to [out=20,in=160] (15.3,-0.2);
\end{tikzpicture}
\end{center}
\end{remark}

\begin{example}[Hodge numbers $\bh=(b,a,b)$]\label{eg:bab'}
The polarized relations amongst the Hodge diamonds in Example \ref{eg:bab} are $\Diamond_{r,s} \preceq \Diamond_{t,u}$ if and only if $r \le t$ and $r+s \le t+u$.  As in Example \ref{eg:gg'} the map $\Phi_\infty: \Diamond(D) \to \cO(D)$ of Remark \ref{R:polorb} sending $\Diamond_{r,s} \mapsto \cO_{r,s}$ is a bijection preserving the relations.  In particular, if $b=1$, then $\prec$ is a linear order.  However, if $b > 1$, then $\prec$ is a partial order, but not a linear order.  For example, if $b = 2$ and $a \ge 4$, then the partial order is represented by the diagram
\begin{equation} \label{E:2a2} 
\begin{tikzpicture}[baseline=(current  bounding  box.center)]
\path (0,0) node(00) {$\Diamond_{0,0}$}
	(2,0) node(01) {$\Diamond_{0,1}$}
	(4,0.5) node(02) {$\Diamond_{0,2}$}
	(4,-0.5) node(10) {$\Diamond_{1,0}$}
	(6,0) node(11) {$\Diamond_{1,1}$}
	(8,0) node(20) {$\Diamond_{2,0}$};
\draw[->] (00) -- (01);
\draw[->] (01) -- (02);
\draw[->] (01) -- (10);
\draw[->] (02) -- (11);
\draw[->] (10) -- (11);
\draw[->] (11) -- (20);
\end{tikzpicture}
\end{equation}
with, for example, $\Diamond_{0,0} \to \Diamond_{0,1}$ indicating $\Diamond_{0,0} \prec \Diamond_{0,1}$.  Likewise for $b = 3$ and $a\ge6$ we have 
\begin{center}
\begin{tikzpicture}
\path (0,0) node(00) {$\Diamond_{0,0}$}
	(2,0) node(01) {$\Diamond_{0,1}$}
	(4,1) node(10) {$\Diamond_{1,0}$}
	(4,-1) node(02) {$\Diamond_{0,2}$}
	(6,1) node(11) {$\Diamond_{1,1}$}
	(6,-1) node(03) {$\Diamond_{0,3}$}
	(8,1) node(20) {$\Diamond_{2,0}$}
	(8,-1) node(12) {$\Diamond_{1,2}$}
	(10,0) node(21) {$\Diamond_{2,1}$}
	(12,0) node(30) {$\Diamond_{3,0}$};
\draw[->] (00) -- (01);
\draw[->] (01) -- (10);
\draw[->] (01) -- (02);
\draw[->] (10) -- (11);
\draw[->] (02) -- (11);
\draw[->] (02) -- (03);
\draw[->] (11) -- (20);
\draw[->] (11) -- (12);
\draw[->] (03) -- (12);
\draw[->] (20) -- (21);
\draw[->] (12) -- (21);
\draw[->] (21) -- (30);
\end{tikzpicture}
\end{center}
\end{example}

\begin{remark}
The period domains for polarized Hodge structures with Hodge numbers $\bh = (g,g)$ or $\bh = (1,a,1)$ are Hermitian symmetric period domains.  More generally, if $D$ is any Hermitian symmetric Mumford--Tate domain, then $\prec$ is a linear order \cite{KPR}.  
\end{remark}

\begin{remark}[Geometric realization of the polarized relations for $\bh=(2,27,2)$]\label{R:2a2}
In analogy with Remark \ref{R:gg}, each of the polarized relations of \eqref{E:2a2} by degenerations of Horikawa surfaces; see the forthcoming \cite{GGLR} for details.
\end{remark}

\begin{problem}
In analogy with Remarks \ref{R:gg} and \ref{R:2a2}, give geometric realizations of the polarized relations for other values of $\bh$.
\end{problem}

\begin{example}[Hodge numbers $\bh = (1,2,2,1)$] \label{eg:1aa1}
From Theorem \ref{T:hd} we see that the Hodge diamonds include the following three
\begin{center}
\begin{tikzpicture}[scale=0.8]
\begin{scope}[xshift=-6cm]
  \draw [<->] (0,3.5) -- (0,0) -- (3.5,0);
  \draw [gray] (1,0) -- (1,3);
  \draw [gray] (2,0) -- (2,3);
  \draw [gray] (3,0) -- (3,3);
  \draw [gray] (0,1) -- (3,1);
  \draw [gray] (0,2) -- (3,2);
  \draw [gray] (0,3) -- (3,3);
  \draw [fill] (0,2) circle [radius=0.08];
  \node [left] at (0,2) {$1$};
  \draw [fill] (1,3) circle [radius=0.08];
  \node [above] at (1,3) {$1$};
  \draw [fill] (1,2) circle [radius=0.08];
  \node [above right] at (1,2) {$1$};
  \draw [fill] (2,0) circle [radius=0.08];
  \node [above right] at (2,0) {$1$};
  \draw [fill] (2,1) circle [radius=0.08];
  \node [above left] at (2,1) {$1$};
  \draw [fill] (3,1) circle [radius=0.08];
  \node [above right] at (3,1) {$1$};
  \node [above left] at (0,3) {$\Diamond_1$};
\end{scope}
  \draw [<->] (0,3.5) -- (0,0) -- (3.5,0);
  \draw [gray] (1,0) -- (1,3);
  \draw [gray] (2,0) -- (2,3);
  \draw [gray] (3,0) -- (3,3);
  \draw [gray] (0,1) -- (3,1);
  \draw [gray] (0,2) -- (3,2);
  \draw [gray] (0,3) -- (3,3);
  \draw [fill] (0,2) circle [radius=0.08];
  \node [left] at (0,2) {$1$};
  \draw [fill] (1,3) circle [radius=0.08];
  \node [above] at (1,3) {$1$};
  \draw [fill] (1,1) circle [radius=0.08];
  \node [above right] at (1,1) {$1$};
  \draw [fill] (2,0) circle [radius=0.08];
  \node [above right] at (2,0) {$1$};
  \draw [fill] (2,2) circle [radius=0.08];
  \node [above right] at (2,2) {$1$};
  \draw [fill] (3,1) circle [radius=0.08];
  \node [above right] at (3,1) {$1$};
  \node [above left] at (0,3) {$\Diamond_2$};
\begin{scope}[xshift=6cm]
  \draw [<->] (0,3.5) -- (0,0) -- (3.5,0);
  \draw [gray] (1,0) -- (1,3);
  \draw [gray] (2,0) -- (2,3);
  \draw [gray] (3,0) -- (3,3);
  \draw [gray] (0,1) -- (3,1);
  \draw [gray] (0,2) -- (3,2);
  \draw [gray] (0,3) -- (3,3);
  \draw [fill] (2,2) circle [radius=0.08];
  \node [below right] at (2,2) {$2$};
  \draw [fill] (1,1) circle [radius=0.08];
  \node [below right] at (1,1) {$2$};
  \draw [fill] (0,0) circle [radius=0.08];
  \node [left] at (0,0) {$1$};
  \draw [fill] (3,3) circle [radius=0.08];
  \node [left] at (3,3) {$1$};
  \node [above left] at (0,3) {$\Diamond_3$};
\end{scope}
\end{tikzpicture}
\end{center}
From Theorem \ref{T:PDpr} we see that $\Diamond_1 \prec \Diamond_2 \prec \Diamond_3$, but $\Diamond_1 \not\prec \Diamond_3$.  That is, \emph{the polarized relation is not transitive}, and therefore can not define a partial order.  There is a similar failure of transitivity for $\bh = (1,a,a,1)$, $a \ge 2$, cf.~\cite{KPR}.
\end{example}

\begin{question}
What are the geometric implications of the failure of transitivity in the Calabi--Yau 3-fold case (Example \ref{eg:1aa1})?
\end{question}

\begin{problem}
Identify the Hodge diamonds for $\bh = (1,a,a,1)$, and determine their polarized relations \cite{KPR}.
\end{problem}

\begin{remark}
Work of Bloch, Kerr and Vanhove \cite{BKV} yields geometric realizations of the polarized relations on Calabi--Yau 3-folds, see \cite{KPR}.
\end{remark}

\begin{remark}
In the case that $D$ is a period domain parameterizing polarized Hodge structures with Hodge numbers $\bh = (1,\ldots,1)$ or $\bh = (1,\ldots,1,2,1,\ldots,1)$, then $\prec$ is a partial order.  In both these cases, the isotropy group (the subgroup of $G$ stabilizing $\varphi \in D$) is a compact torus (equivalently the complex parabolic subgroup of $G_\bC$ stabilizing the Hodge filtration is a Borel).  More generally, if $D$ is a Mumford--Tate domain with isotropy a compact torus, then $\prec$ is a partial order \cite{KPR}.
\end{remark}

%------------------------------------------------------------------------------
%------------------------------------------------------------------------------
\def\cprime{$'$} \def\Dbar{\leavevmode\lower.6ex\hbox to 0pt{\hskip-.23ex
  \accent"16\hss}D}

%\bibliography{../LaTex/refs.bib}
%\bibliographystyle{plain}
%------------------------------------------------------------------------------
\end{document}